\documentclass[10pt, reqno]{amsart}
\usepackage{amsmath, amsthm, hyperref}
\usepackage{times}
\usepackage{amsfonts}
\usepackage{amssymb}
\usepackage{mathtools}
\mathtoolsset{showonlyrefs}
\usepackage{xcolor} 
\usepackage{graphicx} 
\usepackage{cite} 
\usepackage{enumitem}
\setenumerate{itemsep=5pt} 
\setitemize{itemsep=5pt, leftmargin=10pt} 
\setlist[enumerate]{leftmargin=*, itemsep=5pt} 

\renewcommand{\vec}[1]{\mathbf{#1}}
\newcommand{\D}{\mathrm{D}}
\newcommand{\F}{\mathbb{F}}
\renewcommand{\L}{\mathbb{L}}
\newcommand{\M}{\mathsf{M}}

\newcommand{\N}{\mathrm{N}}
\newcommand{\Q}{\mathbb{Q}}

\renewcommand{\S}{\mathcal{S}}

\newcommand{\Z}{\mathbb{Z}}

\newcommand{\K}{\mathbb{K}}
\renewcommand{\O}{\mathcal{O}}

\newcommand{\RR}{\mathcal{R}}
\newcommand{\ran}{\operatorname{ran}}
\newcommand{\adj}{\operatorname{adj}}
\newcommand{\ind}{\operatorname{ind}}

\newcommand{\mdim}{\operatorname{mdim}}

\newcommand{\megamatrix}[9]{\begin{bmatrix} #1 & #2 & #3 \\ #4 & #5 & #6\\ #7 & #8 & #9\end{bmatrix}}

\newcommand{\minimatrixsmall}[4]{[\begin{smallmatrix} #1 & #2\\ #3 & #4 \end{smallmatrix}]}

\newcommand{\diag}{\operatorname{diag}}
\newcommand{\rank}{\operatorname{rank}}

\newcommand{\pdet}{\operatorname{pdet}}

\newtheorem{thm}{Theorem}[section]
\newtheorem{prop}[thm]{Proposition}
\newtheorem{cor}[thm]{Corollary}

\theoremstyle{definition}
\newtheorem{ex}[thm]{Example}

\newtheorem{prob}[thm]{Problem}

\allowdisplaybreaks
\begin{document}
\title[Power-integral matrices over number fields]{Power-integral matrices over number fields: the Drazin inverse, pseudo-determinant, and numerical semigroups}

\author{Theo Chinn}
\address{Department of Mathematics and Statistics, Pomona College, 610 N. College Ave., Claremont, CA 91711, USA}
\email{twcz2023@mymail.pomona.edu}

\author[Junshu Feng]{Junshu Feng}
\email{jfwv2022@mymail.pomona.edu}

\author[S.R.~Garcia]{Stephan Ramon Garcia}
\email{stephan.garcia@pomona.edu}
\urladdr{\url{https://stephangarcia.sites.pomona.edu/}}

\author[Hechun Zhang]{Hechun Zhang}
\address{Department of Mathematical Sciences, Tsinghua University, Beijing,  China}
\email{hczhang@tsinghua.edu.cn}

\thanks{SRG was partially supported by NSF grant DMS-2054002. Hechun Zhang was partially supported by National Natural Science Foundation of China grants No. 12031007 and No. 11971255}

\keywords{power-integral matrix, numerical semigroup, Drazin inverse, pseudo-determinant, algebraic integer, number field, generalized inverse}

\subjclass[2020]{15A99, 15A10, 20M14, 11R04, 05E40}

\begin{abstract}
We investigate matrices with entries in a number field such that some positive power has all its entries in the corresponding ring of integers.  Our work generalizes previous results in several directions and we find applications to numerical semigroups.
\end{abstract}

\maketitle

\section{Introduction}
In what follows, let $\M_d(\cdot)$ denote the set of $d \times d$ matrices with entries in the indicated set.
Let $\K$ denote a number field with ring of integers $\O_{\K}$.  Then $A \in \M_d(\K)$ is 
\emph{power integral with respect to $\K$} if there exists an $n \geq 1$ such that $A^n \in \M_d(\O_{\K})$.
For example, if $\K = \Q(\sqrt[3]{2})$ and
\begin{equation}\label{eq:FirstExample}
A=\begin{bmatrix}
            0 & \frac{1}{91} & -\frac{1}{91} \\[3pt]
            1& -1-\sqrt[3]{2}-\sqrt[3]{4} & 0 \\[1pt]
            1 & -93-2\sqrt[3]{2}& 1+\sqrt[3]{2}-\sqrt[3]{4}
        \end{bmatrix}\in \M_3(\K),
\end{equation}
then the first $n \geq 1$ such that $A^n\in \M_3(\O_\K)$ is $n=9{,}040{,}151{,}688$; see Example \ref{Example:ExplainFirst} for an explanation. In the setting $\K = \Q$, power-integral matrices appeared in \cite{PRM, christofides2025powers},
although the term does not seem to have been used prior to \cite{NSRM1}, which studied them in the context
of numerical semigroups \cite{Assi, Rosales}.  This theme was further developed in \cite{NSRM2,NSRM3,NSRM4}.

The results of this paper generalize previous work in multiple aspects.  First, we work over arbitrary number fields.  In this context,
$\K$ and $\O_{\K}$ take the place of $\Q$ and $\Z$.  Moreover, whereas many of the results in \cite{PRM, NSRM1} require the matrix to be invertible, we can now handle the singular case via the use of the Drazin inverse \cite{BenIsrael}, a generalized inverse familiar to practitioners of applied matrix theory.  A relative of the determinant governs, in part, whether a matrix is power integral.  The \emph{pseudo-determinant} of $A$ is the product $\pdet A$ of the nonzero eigenvalues of $A$ \cite{Knill, Holbrook};
see also \cite[p.~529]{Florescu}.

We also study connections to numerical semigroup theory.  For each $A \in \M_d(\K)$, we consider the 
additive subsemigroup
$\S_{\K}(A) = \{ n \in \Z_{\geq 0} : A^n \in \M_d(\O_{\K})\}$
of $\Z_{\geq 0}$.  This is the number-field generalization of the exponent semigroups considered in \cite{NSRM1,NSRM2,NSRM3,NSRM4}.
Clearly a matrix $A$ is power integral over $\K$ if and only if $\S_{\K}(A) \neq \{0\}$.  We prove, for example,
that if $A \in \M_d(\K)$ has pseudo-determinant in the unit group $\O_{\K}^{\times}$, then $\S_{\K}(A) \subseteq \S_{\K}(T)$, in which $T = A^2A^{\D}$ is the index-$1$ component of $A$ \cite[Thm.~4.11, p.~169]{BenIsrael}; here $A^{\D}$ denotes the Drazin inverse.  This result can fail if the pseudo-determinant is not a unit.

This paper is organized as follows.  Section \ref{Section:Drazin} covers preliminaries about the Drazin inverse and pseudo-determinant.
We consider algebraic integral matrices in Section \ref{Section:AlgebraicIntegral},
a key stepping stone toward power-integral matrices, in which the Drazin inverse plays a crucial role.  Section \ref{Section:PowerIntegral} contains our main results on power-integral matrices.  We wrap up in Section \ref{Section:Numerical} with connections to numerical semigroup theory and several open questions.

\section{The Drazin inverse and pseudo-determinant}\label{Section:Drazin}

Recall that the \emph{index} $\ind A$ of a matrix $A \in \M_d(\K)$ is defined as follows \cite[Def.~12.1.4]{SCLA}.
If $A$ is invertible, its index is $0$. If $A$ is not invertible,
its index is the least positive integer $k$ such that $\rank A^k = \rank A^{k+1}$; in particular,
\begin{equation}\label{eq:Fitting}
 \K^d = \ran(A^k)\oplus \ker(A^k) .
\end{equation}
The index of $A$ is the multiplicity of $0$ as a zero of the minimal polynomial $m_A(x)$.

In what follows, we require the \emph{Drazin inverse}\footnote{The standard notation for the Drazin inverse of $A$ is $A^{\D}$; we employ $\hat{A}$ for typographical convenience.}
$\hat{A}$ of a square matrix $A$ \cite[Sec.~4.6]{BenIsrael}.
If $\ind A = 0$, define $\hat{A} = A^{-1}$.
If $\ind A = k \geq 1$, then the restriction $R$ of $A$ to $\ran(A^k)$ is invertible, so we define $\hat{A} \in \M_d(\F)$ by
\begin{equation*}
\hat{A}\vec{x} =
\begin{cases}
R^{-1} \vec{x} & \text{if $\vec{x} \in \ran(A^k)$},\\
\vec{0} & \text{if $\vec{x} \in \ker(A^k)$}.
\end{cases}
\end{equation*}
The \emph{Drazin inverse} $\hat{A}$ of the index-$k$ matrix $A$ satisfies
\begin{equation}\label{eq:Drazin}
A \hat{A} A^k = A^k, \qquad
A\hat{A}=\hat{A}A, \quad\text{and}\quad \hat{A}A\hat{A} = \hat{A}.
\end{equation}
It is the unique matrix satisfying these identities \cite[Thm.~4.7, p.~164]{BenIsrael}; moreover, $\ind \hat{A} \leq 1$ \cite[Thm.~4.9, p.~168]{BenIsrael}.
Not only does $\hat{A}$ commute with $A$, it is a polynomial in $A$.
Indeed, if $\ind A = k$, write the minimal polynomial of $A$ as 
\begin{equation}\label{eq:FactorMinimal}
m_A(x) = cx^k(1-xq(x)),
\end{equation}
in which $c \neq 0$ and $q(x) \in \K[x]$.  Then \cite[eq.~(42), p.~164; Ex.~34, p.~167]{BenIsrael} ensures that
\begin{equation}\label{eq:PolynomialHat}
\hat{A} = A^k (q(A))^{k+1}.
\end{equation}

The following result is implicit in Wedderburn's book \cite{Wedderburn}.  Ben-Israel and Greville call it 
the ``Index $1$-Nilpotent Decomposition'' 
\cite[Thm.~4.11, p.~169]{BenIsrael}.

\begin{thm}\label{Theorem:IndexNilpotent}
Let $A \in \M_d(\K)$.  Then there exist unique $T,N \in \M_d(\K)$ such that 
$A = T+N$, $\ind T = 1$, $N$ is nilpotent, and $NT = TN = 0$.  
\end{thm}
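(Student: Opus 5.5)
We will prove existence by writing $T$ and $N$ explicitly in terms of the Drazin inverse $\hat{A}$, and uniqueness by showing that any such decomposition must induce the Fitting decomposition \eqref{eq:Fitting}. First a remark on the convention: if $A$ is invertible, then necessarily $N=0$ and $T=A$, which has index $0$. So ``$\ind T = 1$'' should be read as $\ind T \leq 1$, i.e., $\rank T = \rank T^2$. This is the property we will use throughout; equality $\ind T = 1$ holds whenever $A$ is singular (including $T=0$ when $A$ is nilpotent).

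\emph{Existence.} Let $k = \ind A$ and define $T = A^2\hat{A}$ and $N = A - A^2\hat{A} = A(I - A\hat{A})$. By \eqref{eq:Drazin}, $P = A\hat{A}$ is idempotent, since $(A\hat{A})^2 = A\hat{A}A\hat{A} = A\hat{A}$. From the definition of $\hat{A}$, $P$ is the projection onto $\ran(A^k)$ along $\ker(A^k)$. Consequently:
\begin{itemize}
\item $T$ acts as $R$ (the invertible restriction of $A$) on $\ran(A^k)$ and as $0$ on $\ker(A^k)$;
\item $N$ acts as $0$ on $\ran(A^k)$ and as $A$ on $\ker(A^k)$.
\end{itemize}
The remaining properties follow directly. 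Since both $T$ and $N$ respect the direct sum \eqref{eq:Fitting}, and on each summand one of them is zero, we get $TN = NT = 0$. Next, $N^k = A^k(I-P)^k = A^k - A^kA\hat{A} = 0$ by \eqref{eq:Drazin} and commutativity, so $N$ is nilpotent. Finally, $\ran T = \ran T^2 = \ran(A^k)$ because $R$ is invertible, so $\rank T = \rank T^2$ and $\ind T \leq 1$.

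\emph{Uniqueness.} Suppose $A = T + N$ with the stated properties. Since $TN=NT=0$, the binomial expansion collapses to $A^m = T^m + N^m$ for all $m\geq1$. Choose $m \geq \max(k, \text{nilpotency index of } N)$; then $A^m = T^m$. Because $\ind T \le 1$, we have $\ran T^m = \ran T$ and $\ker T^m = \ker T$. Since $m\ge k$, also $\ran A^m = \ran A^k$ and $\ker A^m = \ker A^k$. Hence $\ran T = \ran(A^k)$ and $\ker T = \ker(A^k)$, so the decomposition $\K^d=\ran T\oplus\ker T$ coincides with \eqref{eq:Fitting} and is determined by $A$ alone. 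On $\ran T$, $N$ vanishes because $NT=0$, so $T = A$ there; on $\ker T$, $T = 0$. Thus $T$ is uniquely determined, and then so is $N = A - T$.

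The only delicate step is this identification of $\ran T\oplus\ker T$ with the Fitting decomposition. It rests on choosing $m$ large enough to kill $N^m$ and on the index-$\leq 1$ hypothesis to stabilize the range and kernel of $T^m$; once that is in place, the rest is bookkeeping with \eqref{eq:Drazin}.
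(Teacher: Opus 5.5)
Your proof is correct and follows the route the paper indicates. The paper gives no proof of its own: it cites Ben-Israel--Greville and notes that $T = A^2\hat{A}$, with $A\hat{A}$ the projection onto $\ran(A^k)$ along $\ker(A^k)$. Your existence argument is exactly that construction, and your uniqueness argument (using $A^m = T^m$ for large $m$ to identify $\ran T \oplus \ker T$ with \eqref{eq:Fitting}) fills in what the paper leaves to the citation.

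Your reading of the hypothesis as $\ind T \leq 1$ is a genuine correction, since invertible $A$ forces $N = 0$ and $T = A$, which has index $0$. The only slip is that $N^k = A^k(I-P)^k = A^k(I-P)$ needs $k \geq 1$. For $k = 0$ one has $P = I$, so $N = 0$ directly.
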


In fact, $T$ is the Drazin inverse of $\hat{A}$.
Since $A\hat{A}$ is the projection onto $\ran(A^k)$ along $\ker(A^k)$ \cite[Cor.~2.7, p.~60; Ex.~30, p.~167]{BenIsrael},
we also have $T = A^2\hat{A}$.  Since $\hat{A}$ is a polynomial in $A$, the matrices $A$, $N$, and $T$ commute.

The Drazin inverse respects similarity: $A = QBQ^{-1}$ implies that $\hat{A} = Q\hat{B}Q^{-1}$ \cite[Ex.~26, p.~166]{BenIsrael}.
With respect to the decomposition \eqref{eq:Fitting}, the idempotent $A\hat{A}$ assumes the form $I \oplus 0$.  Since $A\hat{A}$, being a polynomial in $A$, commutes with $A$, we obtain the next theorem, which is a variant of \cite[Thm.~4.8, p.~164]{BenIsrael} (via rational canonical form instead of Jordan form) and a block-matrix version of Theorem \ref{Theorem:IndexNilpotent}.
Let $\diag(\cdot,\cdot)$ denote a block-diagonal matrix with the specified
square matrices as blocks on the diagonal.

If $r=0$ or $r=d$, then we omit the summand $Z$ or $E$,
respectively, in what follows.

\begin{thm}\label{Theorem:RatForm}
Let $A \in \M_d(\K)$ with $p_A(x) \in \O_{\K}[x]$ and let $0\leq r \leq d$ denote the
algebraic multiplicity of $0$ as an eigenvalue of $A$.
Then there exists an $S \in \M_d(\O_{\K})$,
an $E \in \M_{d-r}(\O_{\K})$ with index $0$, and a nilpotent $Z \in \M_r(\O_{\K})$ such that
$A = S\diag(E,Z)S^{-1}$.  
\end{thm}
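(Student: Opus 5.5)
The plan is to use the primary rational canonical form of $A$ over $\K$. The entries of $S$ itself will not matter, because $S$ only needs to be invertible over $\K$; a denominator-clearing rescaling will then put $S$ in $\M_d(\O_{\K})$.

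\emph{Step 1 (factorization).} Write $p_A(x) = x^r g(x)$ with $g(0) \neq 0$, where $g$ is monic of degree $d-r$. Every root of $p_A$ is an algebraic integer, since $p_A$ is monic with coefficients in $\O_{\K}$. Hence every monic factor of $p_A$ in $\K[x]$ has coefficients in $\O_{\K}$: those coefficients are elementary symmetric functions of algebraic integers, so they are algebraic integers, and they lie in $\K$, so they lie in $\O_{\K}$. In particular this applies to each elementary divisor of $A$ over $\K$, that is, to each power $f^e$ of a monic irreducible $f \in \K[x]$ that occurs in the primary rational canonical form, since each such power divides $p_A$.

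\emph{Step 2 (canonical form).} By the primary rational canonical form over $\K$, there is an invertible $Q \in \M_d(\K)$ with $A = Q\,\diag(C_1,\ldots,C_s)\,Q^{-1}$. Each $C_i$ is the companion matrix of an elementary divisor, so by Step 1 each $C_i$ lies in $\M_\cdot(\O_{\K})$. Reorder the blocks by conjugating with a permutation matrix absorbed into $Q$, so that the blocks whose elementary divisors are not powers of $x$ come first. Call their direct sum $E$, and call the direct sum of the remaining blocks, the companion matrices of $x^{e_j}$, $Z$.

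These blocks have the required properties. $Z$ is nilpotent with entries in $\{0,1\}$. $E$ has characteristic polynomial $g$, and $g(0) \neq 0$, so $E$ is invertible, i.e.\ $\ind E = 0$. The sizes are also right: the total degree of the $x$-power elementary divisors is the algebraic multiplicity $r$, so $Z \in \M_r(\O_{\K})$ and $E \in \M_{d-r}(\O_{\K})$. If $r = 0$ or $r = d$, one summand is simply absent, as the convention in the statement allows.

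\emph{Step 3 (integral similarity).} Choose a nonzero integer $m$ such that $S := mQ \in \M_d(\O_{\K})$. One can take $m$ to be a common multiple of denominators: every element of $\K$ times a suitable positive integer lies in $\O_{\K}$. Then $S^{-1} = m^{-1}Q^{-1}$, and the scalars cancel, giving $S\,\diag(E,Z)\,S^{-1} = A$.

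The only genuinely nontrivial point is Step 1, that the monic factors of $p_A$ remain integral. I would handle it by the root-of-algebraic-integers argument above rather than by a Gauss lemma over $\O_{\K}$, since $\O_{\K}$ need not be a UFD. The rest is standard linear algebra over the field $\K$.
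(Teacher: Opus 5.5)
Your proposal is correct and follows essentially the same route as the paper: the elementary-divisor rational canonical form over $\K$, with the companion blocks integral because each elementary divisor is a monic factor of $p_A(x)$, followed by rescaling the similarity matrix by a positive integer. The only difference is in justifying integrality of those monic factors. The paper cites Gauss' lemma, while you argue that the roots are algebraic integers, which is the cleaner justification since $\O_{\K}$ need not be a UFD.
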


\begin{proof}
Factor $p_A(x) = x^r q(x)$; Gauss' lemma ensures that $q(x)$ and its irreducible factors belong to $\O_{\K}[x]$ \cite[Prop.~5, p.~303]{dummit2004abstract}.
The asserted decomposition follows from the elementary-divisor version of the rational canonical form \cite{friedberg_linear_2003} since every companion matrix that arises has entries in $\O_{\K}$.
The similarity matrix $S$ from the rational canonical form can be multiplied by a suitable natural number to ensure that its entries belong to $\O_{\K}$.  \end{proof}

The uniqueness portion of Theorem \ref{Theorem:IndexNilpotent} ensures that $T = S\diag(E,0)S^{-1}$ and $N = S \diag(0,Z)S^{-1}$.  We define the \emph{pseudo-determinant} of $A$ to be $\pdet(A) = \det E$;
that is, it is $(-1)^{d-r}$ times the product of the nonzero eigenvalues of $A$. Thus, $\pdet A = 1$ for a nilpotent matrix \cite[p.~525]{Knill}, although this special case does not concern us. Note that 
\begin{equation*}
p_A(x) = x^r p_E(x) = p_T(x),
\end{equation*}
so $A$ and $T$ have the same eigenvalues and algebraic multiplicities.  Unlike its more famous namesake, the pseudo-determinant is not multiplicative \cite[p.~524]{Knill}.  However, 
\begin{equation}\label{eq:Multiplicative}
\pdet(A^n) = (\pdet A)^n
\end{equation}
for $n \geq 1$ \cite[Prop.~2.(10)]{Knill}.  This follows from the
fact that the eigenvalues of $A^n$ are the $n$th powers of the eigenvalues of $A$, multiplicity respected \cite[Cor.~11.1.4]{SCLA}.

\section{Algebraic-integral matrices}\label{Section:AlgebraicIntegral}
In our study of power-integral matrices, we must first consider a related topic.
We say that $A\in \M_n(\K)$ is \emph{algebraic integral} over $\K$ if there exists a monic $f(x)\in\O_{\K}[x]$ such that $f(A)=0$.  The following result generalizes \cite[Thm.~3.3]{NSRM1}; (g) is novel.

\begin{thm}\label{Theorem:AlgebraicIntegral}
Let $A\in \M_d(\K)$. The following are equivalent.
\begin{enumerate}
\item $A$ is algebraic integral over $\K$.
\item $m_A(x)\in\O_{\K}[x]$.
\item $p_A(x)\in\O_{\K}[x]$.
\item Every eigenvalue of $A$ is an algebraic integer.
\item There exists an integer $m \geq 1$ such that $m A^n\in \M_d(\O_{\K})$ for all $n\geq 0$.
\item There exist a $B\in \M_d(\O_{\K})$ and invertible $S\in\M_d(\O_{\K})$ such that $A=SBS^{-1}$.
\item $T = A^2\hat{A}$ is algebraic integral over $\K$.
\end{enumerate}
\end{thm}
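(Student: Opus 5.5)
I will prove the equivalences by a cycle (a)$\Rightarrow$(d)$\Rightarrow$(c)$\Rightarrow$(b)$\Rightarrow$(a), and then attach (e), (f), and (g) to this cycle separately.

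For (a)$\Rightarrow$(d): if $f(A)=0$ with $f$ monic in $\O_{\K}[x]$, then every eigenvalue $\lambda$ satisfies $f(\lambda)=0$. Hence $\lambda$ is integral over $\O_{\K}$, and so it is an algebraic integer. For (d)$\Rightarrow$(c): the coefficients of $p_A$ are, up to sign, elementary symmetric functions of the eigenvalues. They are therefore algebraic integers, and they lie in $\K$, so they lie in $\O_{\K}$. For (c)$\Rightarrow$(b): $m_A$ is a monic divisor of $p_A$, so its roots are algebraic integers, and the same symmetric-function argument puts its coefficients in $\O_{\K}$. (Alternatively, Gauss' lemma as in the proof of Theorem \ref{Theorem:RatForm}.) The step (b)$\Rightarrow$(a) is trivial. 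Cayley--Hamilton also gives (c)$\Rightarrow$(a) directly.

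For (c)$\Rightarrow$(f): apply Theorem \ref{Theorem:RatForm}, which gives $A=S\diag(E,Z)S^{-1}$ with $S\in\M_d(\O_{\K})$ and $B=\diag(E,Z)\in\M_d(\O_{\K})$. Here ``invertible $S\in\M_d(\O_\K)$'' is read as invertible in $\M_d(\K)$, as that theorem provides. For (f)$\Rightarrow$(e): we have $A^n=SB^nS^{-1}$, and $S^{-1}=(\det S)^{-1}\adj S$ with $\adj S\in\M_d(\O_{\K})$. Choose a positive integer $m$ with $m(\det S)^{-1}\in\O_{\K}$; for instance, take $m=|\N_{\K/\Q}(\det S)|$, since $\N(\alpha)/\alpha\in\O_{\K}$. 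Then $mA^n\in\M_d(\O_{\K})$ for all $n$. For (e)$\Rightarrow$(a): the $\O_{\K}$-module $M$ generated by $\{A^n:n\ge0\}$ lies in $\frac1m\M_d(\O_{\K})$. Since $\O_{\K}$ is Noetherian and $\frac1m\M_d(\O_{\K})$ is a finitely generated $\O_{\K}$-module, $M$ is finitely generated. Then the ascending chain of submodules spanned by $I,A,\dots,A^j$ stabilizes, so $A^N=\sum_{j<N}c_jA^j$ with $c_j\in\O_{\K}$, which is a monic relation. A more elementary alternative is via (d): if $A\vec v=\lambda\vec v$, then $m\lambda^n\vec v$ has bounded denominators for every $n$, so $\O_{\K}[\lambda]$ lies in a finitely generated module. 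I expect this to be the step needing the most care, though it is standard.

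For (g): by the discussion after Theorem \ref{Theorem:RatForm}, $p_T(x)=p_A(x)$. So (c) for $A$ is equivalent to (c) for $T$. Applying the already established equivalence (a)$\Leftrightarrow$(c) to $T$ then gives (a)$\Leftrightarrow$(g). Note, however, that Theorem \ref{Theorem:RatForm} assumes $p_A\in\O_{\K}[x]$. To avoid circularity, I will instead justify $p_A=p_T$ over $\K$ directly: use the rational canonical form over $\K$ together with the uniqueness in Theorem \ref{Theorem:IndexNilpotent}. Equivalently, observe that $A$ and $T$ have the same nonzero eigenvalues with multiplicity and that the remaining eigenvalues are $0$. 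This suffices for (d)$\Leftrightarrow$(g).
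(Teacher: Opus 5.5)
Your proof is correct. The cycle (a)$\Rightarrow$(d)$\Rightarrow$(c)$\Rightarrow$(b)$\Rightarrow$(a) and your treatment of (g) are essentially the paper's. Your justification of $p_T=p_A$ over $\K$, without presupposing $p_A\in\O_{\K}[x]$, is a small but legitimate tightening of the paper's one-line appeal to shared eigenvalues.

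The real difference is how (e) and (f) are attached. The paper runs (c)$\Rightarrow$(e)$\Rightarrow$(f)$\Rightarrow$(c). An explicit Cayley--Hamilton recurrence gives the uniform denominator $m$. The columns of $S$ are then taken to be a basis of the $A$-invariant lattice $\Lambda=\{\vec{x}\in\O_{\K}^d : A^n\vec{x}\in\O_{\K}^d \text{ for all } n\geq 0\}$.

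You run (c)$\Rightarrow$(f)$\Rightarrow$(e)$\Rightarrow$(a) instead. Theorem \ref{Theorem:RatForm} gives (f) outright. The adjugate together with $\N_{\K/\Q}(\alpha)/\alpha\in\O_{\K}$ gives (e). Finally, the ascending chain of $\O_{\K}$-spans of $I,A,\ldots,A^j$ inside the Noetherian module $\frac1m\M_d(\O_{\K})$ yields a monic relation.

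The paper's path is more computational and avoids Noetherian arguments. However, its lattice step silently assumes that $\Lambda$ is a free $\O_{\K}$-module. This is automatic when $\O_{\K}$ is a PID but can fail otherwise. Take $\K=\Q(\sqrt{-5})$ and the nilpotent $A\in\M_2(\K)$ whose only nonzero entry is $a_{21}=(1-\sqrt{-5})/2$. Then $\Lambda=\mathfrak{p}\oplus\O_{\K}$ with $\mathfrak{p}=(2,1+\sqrt{-5})$ non-principal, so $\Lambda$ has no basis.

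Your route never needs such a basis, since Theorem \ref{Theorem:RatForm} only clears the denominators of a similarity matrix by an integer. It is therefore the more robust argument over a general number field.
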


\begin{proof}
$(a) \Rightarrow (d)$ Let $f(A) = 0$, in which $f(x)\in\O_{\K}[x]$ is monic. Then every eigenvalue of $A$ is a zero of $f$ \cite[Thm.~9.3.3]{SCLA}. Since $f(x)$ is a monic polynomial whose coefficients are algebraic integers, each eigenvalue of $A$ is an algebraic integer \cite[Thm.~2.10]{AlgNumTheory}.

\medskip\noindent$(d) \Rightarrow (c)$ The hypothesis ensures that the zeros of $p_A(x)\in\K[x]$ are algebraic integers. Recall that sums and products of algebraic integers are algebraic integers \cite[Thm.~2.9]{AlgNumTheory}. The coefficients of $p_A(x)$ are elementary symmetric polynomials of the zeros of $p_A(x)$, so they are algebraic integers. Thus, the coefficients of $p_A(x)$ are in $\O_{\K}$.

\medskip\noindent$(c) \Rightarrow (b)$ Suppose that $p_A(x) \in \O_{\K}[x]$.  Since $m_A$ divides $p_A$ in $\K[x]$ and $m_A(x)$ is monic, Gauss' lemma \cite[Prop.~5, p.~303]{dummit2004abstract} ensures that $m_A(x) \in \O_{\K}[x]$.

\medskip\noindent$(b) \Rightarrow (a)$ This follows from the Cayley--Hamilton theorem \cite[Thm.~11.2.1]{SCLA}.

\medskip\noindent$(c) \Rightarrow (e)$
Suppose that $p_A(x)=x^d+c_{d-1}x^{d-1}+\cdots+c_1x+c_0\in\O_{\K}[x]$. Let $m$ be a positive integer such that $mA^i\in\M_d(\O_{\K})$ for each $i=1,2,\ldots, d-1$. The Cayley--Hamilton theorem ensures that $p_A(A)=0$, so $A^{d+i}=-c^{d-1}A^{d+i-1}-\cdots-c_1A^{1+i}-c_0A^i$ for $i \geq 0$. Induction confirms that $mA^n\in\M_d(\O_{\K})$ for all $n\geq 0$.

\medskip\noindent$(e) \Rightarrow (f)$ Suppose that $mA^n\in\M_d(\O_{\K})$ for all $n\geq 0$. Then 
\begin{center}
$\Lambda=\{\vec{x}\in\O_{\K}^d : \text{$A^n\vec{x}\subseteq \O_{\K}^d$ for all $n\geq 0$}\}$
\end{center}
is a full-rank sublattice of $\O_{\K}^d$ since $m\O_{\K}^d\subseteq\Lambda\subseteq\O_{\K}^d$. Let the columns of $S\in\M_d(\K)$ comprise a lattice basis of $\Lambda$. Since $A\Lambda\subseteq\Lambda$, there exists a $B\in\M_d(\O_{\K})$ such that $AS=SB$; that is, $A=SBS^{-1}$. We can now replace $S$ by a suitable integer multiple of $S$ to ensure that it belongs to $\M_d(\O_{\K})$.

\medskip\noindent$(f) \Rightarrow (c)$ Similar matrices share the same characteristic polynomial \cite[Thm.~10.3.1]{SCLA}. Since $A$ and $B$ are similar, $p_A(x)=p_B(x)\in\O_{\K}[x]$.

\medskip\noindent(a) $\Leftrightarrow$ (g) 
Since the eigenvalues of $A$ and $T$ are the same, use the equivalence (a) $\Leftrightarrow$ (d).
\end{proof}

\begin{cor}
Let $A,B\in \M_d(\K)$ be algebraic integral over $\K$ and $AB=BA$. Then $AB$ and $A+B$ are algebraic integral over $\K$.
\end{cor}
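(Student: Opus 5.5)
The plan is to reduce everything to eigenvalues via the equivalence (a) $\Leftrightarrow$ (d) of Theorem \ref{Theorem:AlgebraicIntegral}. Since $A$ and $B$ commute, they can be simultaneously triangularized over an algebraic closure $\overline{\K}$ (or over a finite extension of $\K$ containing all eigenvalues of $A$ and $B$). Concretely, there is an invertible $Q$ with entries in such an extension so that $Q^{-1}AQ$ and $Q^{-1}BQ$ are both upper triangular. The diagonal entries are the eigenvalues $\alpha_1,\ldots,\alpha_d$ of $A$ and $\beta_1,\ldots,\beta_d$ of $B$, each listed with multiplicity and in some order.

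The products $Q^{-1}ABQ$ and $Q^{-1}(A+B)Q$ are then upper triangular with diagonal entries $\alpha_i\beta_i$ and $\alpha_i+\beta_i$. Hence every eigenvalue of $AB$ has the form $\alpha_i\beta_i$, and every eigenvalue of $A+B$ has the form $\alpha_i+\beta_i$. By (a) $\Rightarrow$ (d), each $\alpha_i$ and $\beta_i$ is an algebraic integer. Sums and products of algebraic integers are algebraic integers \cite[Thm.~2.9]{AlgNumTheory}, so every eigenvalue of $AB$ and of $A+B$ is an algebraic integer. Since $AB, A+B \in \M_d(\K)$, the implication (d) $\Rightarrow$ (a) of Theorem \ref{Theorem:AlgebraicIntegral} shows that both are algebraic integral over $\K$.

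The only step requiring care is simultaneous triangularization of commuting matrices. It follows from the standard induction. Commuting operators over an algebraically closed field share a common eigenvector, because $B$ preserves each eigenspace of $A$. One then passes to the quotient space and repeats. Note that the theorem is stated for $A \in \M_d(\K)$ with eigenvalues possibly outside $\K$; the (d) condition concerns eigenvalues in $\overline{\K}$ in any case, so working over $\overline{\K}$ is harmless.

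As an alternative avoiding triangularization, I could use (e). Choose $m$ with $mA^n, mB^n \in \M_d(\O_{\K})$ for all $n$. Expanding $(A+B)^n$ binomially and using commutativity would give $m^2(A+B)^n \in \M_d(\O_{\K})$, and similarly $m^2(AB)^n = (mA^n)(mB^n) \in \M_d(\O_{\K})$. Then (e) $\Rightarrow$ (a) finishes the proof. This route is cleaner and is the one I would actually write.
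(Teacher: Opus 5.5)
Your proposal is correct. Your first argument is the paper's own: the paper also passes through (a)$\Leftrightarrow$(d) of Theorem~\ref{Theorem:AlgebraicIntegral}. It cites the fact that the eigenvalues of commuting $A$ and $B$ can be ordered so that $A+B$ and $AB$ have eigenvalues $\lambda_i+\mu_i$ and $\lambda_i\mu_i$, which is exactly what your simultaneous triangularization proves.

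The route you say you would actually write is genuinely different. Take one integer $m$ serving both $A$ and $B$ in (e), e.g.\ the product of the two. Then the identities $m^2(A+B)^n=\sum_{k=0}^{n}\binom{n}{k}(mA^k)(mB^{n-k})$ and $m^2(AB)^n=(mA^n)(mB^n)$ stay in $\M_d(\O_{\K})$. So you never leave $\K$, need no eigenvalues or pairing of spectra, and get an explicit uniform denominator $m^2$.

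The cost is that you lean on (e)$\Rightarrow$(a) instead of the eigenvalue characterization. In the paper, (e)$\Rightarrow$(a) is reached only through (e)$\Rightarrow$(f), whose proof takes an $\O_{\K}$-basis of the lattice $\Lambda$. Such a basis exists when $\O_{\K}$ is a PID, but not in general. For instance, take $\K=\Q(\sqrt{-5})$ and $A=\minimatrixsmall{0}{0}{c}{0}$ with $c=\tfrac{1+\sqrt{-5}}{2}$, which satisfies (e) with $m=2$. Then $\Lambda=\mathfrak{p}\oplus\O_{\K}$ with $\mathfrak{p}=(2,1+\sqrt{-5})$ non-principal, so $\Lambda$ is not free.

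The implication (e)$\Rightarrow$(a) is nevertheless true, and you can make your route independent of that step. For $C\in\{A+B,AB\}$, the $\O_{\K}$-modules $M_k=\O_{\K}I+\O_{\K}C+\cdots+\O_{\K}C^k$ form an ascending chain in the Noetherian $\O_{\K}$-module $m^{-2}\M_d(\O_{\K})$. Hence $M_{k+1}=M_k$ for some $k$. This writes $C^{k+1}$ as an $\O_{\K}$-combination of $I,C,\ldots,C^k$, i.e., it gives a monic annihilating polynomial for $C$ in $\O_{\K}[x]$.
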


\begin{proof}
The eigenvalues of $A$ and $B$ are algebraic integers by Theorem \ref{Theorem:AlgebraicIntegral}. Since $AB=BA$, there are orderings $\lambda_1,\lambda_2,\ldots,\lambda_d$ and $\mu_1,\mu_2,\ldots,\mu_d$ of the eigenvalues of $A$ and $B$, respectively, such that $\lambda_1+\mu_1,\lambda_2+\mu_2,\ldots,\lambda_d+\mu_d$ are the eigenvalues of $A+B$ and $\lambda_1\mu_1,\lambda_2\mu_2,\ldots,\lambda_d\mu_d$ are the eigenvalues of $AB$ \cite[Cor.~11.5.2]{SCLA}. Sums and products of algebraic integers are algebraic integers, so the eigenvalues of $A+B$ and $AB$ are algebraic integers. Thus, Theorem \ref{Theorem:AlgebraicIntegral} ensures that $A+B$ and $AB$ are algebraic integral. 
\end{proof}

\begin{cor} \label{Kronecker Product}
Let $A\in \M_m(\K)$ and $B\in \M_n(\K)$ be algebraic integral over $\K$. Then $A\otimes B$ and $A\otimes I_n+I_m\otimes B$ are algebraic integral over $\K$. 
\end{cor}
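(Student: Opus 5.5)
The plan is to reduce everything to eigenvalues via the equivalence (a) $\Leftrightarrow$ (d) of Theorem \ref{Theorem:AlgebraicIntegral}. Since $A$ and $B$ are algebraic integral over $\K$, every eigenvalue $\lambda_1,\ldots,\lambda_m$ of $A$ and every eigenvalue $\mu_1,\ldots,\mu_n$ of $B$ is an algebraic integer. It therefore suffices to show that the eigenvalues of $A\otimes B$ and of $A\otimes I_n+I_m\otimes B$ are algebraic integers. Both matrices clearly have entries in $\K$, so Theorem \ref{Theorem:AlgebraicIntegral} then applies to them, in $\M_{mn}(\K)$.

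The key step is to identify these eigenvalues with multiplicity. I would triangularize over an algebraic closure $\overline{\K}$ (or a splitting field): there are invertible $U,V$ with $U^{-1}AU=R_A$ and $V^{-1}BV=R_B$ upper triangular, with diagonals $(\lambda_i)$ and $(\mu_j)$. The mixed-product property of the Kronecker product gives
\begin{equation*}
(U\otimes V)^{-1}(A\otimes B)(U\otimes V)=R_A\otimes R_B,
\end{equation*}
and similarly $(U\otimes V)^{-1}(A\otimes I_n+I_m\otimes B)(U\otimes V)=R_A\otimes I_n+I_m\otimes R_B$. Here $(U\otimes V)^{-1}=U^{-1}\otimes V^{-1}$. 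Kronecker products and sums of upper triangular matrices are upper triangular in the lexicographic ordering of the index pairs. Their diagonal entries are $\lambda_i\mu_j$ and $\lambda_i+\mu_j$, respectively. Hence the eigenvalues of $A\otimes B$ are the products $\lambda_i\mu_j$, and those of the Kronecker sum are the sums $\lambda_i+\mu_j$.

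Finally, sums and products of algebraic integers are algebraic integers \cite[Thm.~2.9]{AlgNumTheory}. So every eigenvalue of both matrices is an algebraic integer, and (d) $\Rightarrow$ (a) of Theorem \ref{Theorem:AlgebraicIntegral} finishes the argument.

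The only mildly delicate step is the triangularization: it must be done over an extension of $\K$, whereas Theorem \ref{Theorem:AlgebraicIntegral} concerns matrices over $\K$. This causes no problem, because only the eigenvalues (the roots of $p(x)\in\K[x]$) are being computed, and these do not depend on the field used to compute them.

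A shortcut would be to observe that $A\otimes I_n$ and $I_m\otimes B$ commute and are each algebraic integral, since their eigenvalues are the $\lambda_i$ and the $\mu_j$. The preceding corollary then gives both $A\otimes B=(A\otimes I_n)(I_m\otimes B)$ and the Kronecker sum directly. I would present this version, noting that it still relies on the eigenvalue computation for $A\otimes I_n$ and $I_m\otimes B$. That computation is immediate, since $p_{A\otimes I_n}=p_A^n$ and $I_m\otimes B$ is permutation-similar to $B\otimes I_m$.
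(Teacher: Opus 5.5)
Your proposal is correct. Your first argument is the paper's proof. The paper cites \cite[Ex.~9.31]{SCLA} for the fact that the eigenvalues of $A\otimes B$ and $A\otimes I_n+I_m\otimes B$ are the $\lambda_i\mu_j$ and $\lambda_i+\mu_j$, whereas your triangularization by $U\otimes V$ proves it directly.

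The shortcut you say you would present is a mildly different route. It reduces the statement to the preceding corollary on commuting matrices. The only Kronecker-specific inputs are then that $A\otimes I_n$ and $I_m\otimes B$ commute and are algebraic integral. The pairing of eigenvalues is handled by the commuting-matrix result \cite[Cor.~11.5.2]{SCLA} invoked there. That route is shorter and reuses existing machinery, while the paper's route exhibits the full spectrum explicitly.

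In the shortcut you need no eigenvalue computation at all. If $f,g\in\O_{\K}[x]$ are monic with $f(A)=0$ and $g(B)=0$, then $f(A\otimes I_n)=f(A)\otimes I_n=0$ and $g(I_m\otimes B)=I_m\otimes g(B)=0$, directly from the definition. A minor point: $I_m\otimes B=\diag(B,\ldots,B)$ is already block diagonal, so $p_{I_m\otimes B}=p_B^m$ is the immediate case. It is $A\otimes I_n$ that needs a step, for example permutation similarity to $I_n\otimes A$.
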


\begin{proof}
The eigenvalues of $A$ and $B$ are algebraic integers by Theorem \ref{Theorem:AlgebraicIntegral}. Let $\lambda_1, \lambda_2,\ldots, \lambda_m$ and $\mu_1, \mu_2, \ldots, \mu_n$ be the eigenvalues of $A$ and $B$ respectively. The eigenvalues of $A\otimes B$ are $\lambda_i\mu_j$ for $i\in\{1,2,\ldots,m\}$ and $j\in\{1,2,\ldots,n\}$ and the eigenvalues of $A\otimes I_n+I_m\otimes B$ are  $\lambda_i+\mu_j$ for $i\in\{1,2,\ldots,m\}$ and $j\in\{1,2,\ldots,n\}$ \cite[Ex.~9.31]{SCLA}. Since the sums and products of algebraic integers are algebraic integers, the eigenvalues of $A\otimes B$ and $A\otimes I_n+I_m\otimes B$ are algebraic integers.  Theorem \ref{Theorem:AlgebraicIntegral} ensures that $A\otimes B$ and $A\otimes I_n+I_m\otimes B$ are algebraic integral.
\end{proof}

\section{Power-integral matrices}\label{Section:PowerIntegral}
We say that $A \in \M_d(\K)$ is \emph{power integral over $\K$} if $A^n \in \M_d(\O_{\K})$ for some $n\geq 1$.  Direct sums, transposes, and Kronecker products of power-integral matrices are power integral.  The product of commuting power-integral matrices is also power integral.
Suppose that $A = T+N$ is the decomposition of Theorem \ref{Theorem:IndexNilpotent}, then $A^n = T^n$ for all $n\geq \ind A$.  Thus, $A$ is power-integral if and only if $T$ is.

\begin{ex}
If  $B\in \M_d(\Z)$ and $B^2=0$, then $A=I+\frac{1}{n}B$ is power integral over $\Z$ since  $A^n\in \M_d(\Z)$ but $A^k\notin \M_n(\Z)$ for $k=1,2,\ldots,n-1$ \cite{christofides2025powers}. 
The same phenomenon was independently observed in the context
of numerical semigroups \cite[Prop.~2.2.a]{NSRM1}.
\end{ex}

The next result is a broad generalization of \cite{PRM}.
It improves upon it in two ways.  First, it applies
to arbitrary number fields, as opposed to just the rational numbers.  Second, it does not require $A$ to be invertible with determinant $\pm 1$.  Instead, we only require that
its pseudo-determinant be a unit in $\O_{\K}$; in particular, $A$ need not even be invertible over $\K$.
In what follows, $\O_{\K}^{\times}$ denotes the group of units in $\O_{\K}$.

\begin{thm}\label{Theorem:Pigeon}
If $A = SBS^{-1}$, in which $B,S\in \M_d(\O_{\K})$, $\pdet A\in \O_{\K}^{\times}$, and $\alpha = \det S \neq 0$,
then there is a positive integer $$r\leq \max\{\ind A, 1\}\N_{\K}(\alpha)^{2[\K:\Q]d^2}$$ such that $A^r\in \M_d(\O_{\K})$.
\end{thm}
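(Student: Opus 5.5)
Let $k=\ind A$ and write $A=T+N$ as in Theorem \ref{Theorem:IndexNilpotent}, so that $A^n=T^n$ for all $n\ge k$. The plan is to work with $B$ rather than $A$: since $A^n=SB^nS^{-1}=\alpha^{-1}S B^n \adj(S)$, it suffices to find $n$ with $B^n\equiv C \pmod{\alpha\,\M_d(\O_{\K})}$ for some $C$ with $S C\adj(S)\in \alpha\M_d(\O_\K)$. The natural choice is a pigeonhole argument in the finite ring $\M_d(\O_{\K}/\alpha\O_{\K})$, which has $\N_{\K}(\alpha)^{d^2}$ elements, since $|\O_\K/\alpha\O_\K|=|\N_{\K}(\alpha)|$. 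I expect the exponent $2[\K:\Q]d^2$ to come from using a cruder modulus, such as a rational integer multiple of $\alpha$ (for instance $\N_\K(\alpha)$ itself, whose quotient has size $\N_\K(\alpha)^{[\K:\Q]}$), or from squaring to account for both $S$ and $\adj S$. I will aim for a bound of that shape and not insist on the sharpest count.

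\textbf{Step 1 (reduce to an index-$\le 1$ matrix with unit pseudo-determinant).} Put $B_0=B^{\max\{k,1\}}$. Then $B_0$ has index at most $1$, lies in $\M_d(\O_\K)$, and by \eqref{eq:Multiplicative} satisfies $\pdet B_0=(\pdet A)^{\max\{k,1\}}\in\O_\K^\times$. Its Drazin inverse is a polynomial in $B_0$ by \eqref{eq:PolynomialHat}. The key claim is that this polynomial has coefficients in $\O_\K$. Indeed, $p_{B_0}(x)=x^r p_E(x)$ with $p_E(0)=\pm\pdet B_0$ a unit. So in \eqref{eq:FactorMinimal} we may normalize $c$ so that $1-xq(x)$ is the unit multiple of the nonzero-root part of $m_{B_0}$, and then $q\in\O_\K[x]$ by Gauss' lemma. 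Consequently $\hat B_0\in\M_d(\O_\K)$, and $P=B_0\hat B_0$ is an $\O_\K$-integral idempotent.

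\textbf{Step 2 (pigeonhole).} Consider the sequence $B_0^{j}$ modulo $M=\alpha$, or modulo $\N_\K(\alpha)$ if needed. Because $\hat B_0$ is integral, the elements $B_0^j$ with $j\ge1$ lie in the group of units of the ring $P\M_d(\O_\K/M)P$, with inverse $\hat B_0^{\,j}$. Hence the sequence is purely periodic: there exist $1\le i<j$ within the size of that ring with $B_0^i\equiv B_0^j$, and multiplying by $\hat B_0^{\,i}$ gives $B_0^{m}\equiv P \pmod M$ with $m=j-i$ bounded by the ring size. Thus $B^{\max\{k,1\}m}=P+MX$ with $X$ integral.

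\textbf{Step 3 (transport back).} Then $A^{\max\{k,1\}m}=SPS^{-1}+MSXS^{-1}$. The second term is integral because $MS^{-1}=(M/\alpha)\adj S$ is integral, as $\alpha\mid M$. The first term is $SPS^{-1}=A\hat A$, which is not obviously integral, and this is the main obstacle. To handle it, I will instead run the pigeonhole on pairs, or apply the same argument with $M=\alpha^2$ (whence the squared exponent), to get $B_0^{m}\equiv B_0^{2m}$. Writing $A^{\max\{k,1\}m}=SPS^{-1}+\cdots$, I would then try to show directly that $\alpha A\hat A\cdot A^{\max\{k,1\}}$ remains controlled, by choosing $r=\max\{k,1\}(m+1)$ so that the idempotent is absorbed by a factor $B_0$: $SB_0PS^{-1}=SB_0S^{-1}$. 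If this absorption fails, the fallback is to note that $A\hat A=\hat A A$ with $A\hat A$ a polynomial in $A$, and to bound its denominators by $\alpha$ via $\adj S$ using the modulus $\alpha^2$. This gives the factor $2$ in the exponent, with $\N_\K$ of a power of $\alpha$ arising from the ring-size count.
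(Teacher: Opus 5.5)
Your Steps 1 and 2 are sound. The Gauss-lemma argument does make $\hat B_0$ integral; it is the argument of Theorem \ref{Theorem:SemigroupInverse}. Since $B_0^{\,j}\hat B_0^{\,j}=P$, the pigeonhole in the unit group of the corner ring $P\,\M_d(\O_\K/\alpha\O_\K)\,P$ gives $B_0^{m}=P+\alpha X$ with $X\in\M_d(\O_\K)$ and $m\le|\N_\K(\alpha)|^{d^2}$. Working with the given $B$ and $S$ is cleaner than the paper's proof, which reuses the letter $S$ for the matrix from Theorem \ref{Theorem:RatForm}, so its $\alpha$ is not the determinant of the given $S$.

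The genuine gap is the one you flagged in Step 3: you need $SPS^{-1}=A\hat A\in\M_d(\O_\K)$, and none of your fallbacks supplies it.
\begin{itemize}
\item Passing to the modulus $\alpha^2$, or pigeonholing on pairs, only affects the error term $SX\adj S$. It never touches the main term $A\hat A$, which does not depend on $r$.
\item The absorption choice $r=\max\{k,1\}(m+1)$ only gives $A^r\equiv A^{\max\{k,1\}}$ modulo $\M_d(\O_\K)$, which is circular.
\item Knowing $\alpha A\hat A\in\M_d(\O_\K)$ does not give integrality of $A\hat A$.
\end{itemize}

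The gap cannot be closed, because integrality of $A\hat A$ is necessary for the conclusion and does not follow from the hypotheses. It is necessary: if $A^r\in\M_d(\O_\K)$ for some $r\ge 1$, then Theorem \ref{Theorem:SemigroupInverse} makes $\hat A^{\,r}$ integral. (Equivalently, apply your Step 1 to $C=A^r$, which is integral with $\pdet C=(\pdet A)^r$ a unit.) Hence $A\hat A=(A\hat A)^r=A^r\hat A^{\,r}\in\M_d(\O_\K)$. It does not follow: take $\K=\Q$, $B=\diag(1,0)$ and $S=\minimatrixsmall{1}{-1}{1}{1}$, so $\alpha=2$ and
\[
A=SBS^{-1}=\frac{1}{2}\begin{bmatrix}1&1\\1&1\end{bmatrix}.
\]
Then $\ind A=1$ and $\pdet A=1$, yet $A^2=A$, so $A^r=A\notin\M_2(\Z)$ for every $r\ge 1$.

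So the statement is false once singular $A$ is allowed. The same example refutes (c)$\Rightarrow$(a) in Theorem \ref{Theorem:Unit}. The paper's proof gets past this point only by asserting $A^i\hat A^{\,i}=S\diag(I,0)S^{-1}=A\hat A\in\M_d(\O_\K)$ without justification.

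What your argument does prove, with the modulus $\alpha$, is a corrected statement. Assume the stated hypotheses together with $A\hat A\in\M_d(\O_\K)$; this is automatic when $A$ is invertible, since then $P=I$. Then there is some $r\le\max\{\ind A,1\}\,|\N_\K(\alpha)|^{d^2}$ with $A^r\in\M_d(\O_\K)$, and this bound is sharper than the claimed one.
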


\begin{proof}
Similarity ensures that $p_A(x) = p_B(x) \in \O_{\K}[x]$. Theorem \ref{Theorem:RatForm} ensures that
$A = S\diag(E,Z)S^{-1}$, in which $S \in \M_d(\O_{\K})$, the entries of the square matrices $E$ and $Z$ belong to $\O_{\K}$, $\det E \in \O_{\K}^{\times}$, and $Z$ is nilpotent.  
Then for $i \geq 1$, we have 
\begin{equation}
\alpha A^i=\alpha S\diag(E^i, Z^i)S^{-1} =S\diag(E^i,Z^i)(\adj S)\in\M_d(\O_{\K}),
\end{equation}
in which $\adj S$ denotes the adjugate of $S$ \cite[(C.4.2)]{SCLA}.
Similarly, 
\begin{equation*}
\alpha \hat{A}^i = S\diag(E^{-i},0)(\adj S)\in\M_d(\O_{\K})
\end{equation*}
since $\det E \in \O_{\K}^{\times}$. 
Additionally, 
\begin{equation}
A^i{\hat{A}}^i=S\diag(I,0)S^{-1}=A\hat{A}\in\M_d(\O_{\K}).
\end{equation}
Let $m=\N_{\K}(\alpha)^2$, which is a positive integer. Since the quotient ring
$\RR=\O_{\K}/m\O_{\K}$ has cardinality $|\RR| = [\O_{\K}:m\O_{\K}]  = m^{[\K:\Q]} = \N_{\K}(\alpha)^{2[\K:\Q]}$,
it follows that
\begin{equation*}
|\M_d(\RR)| = |\RR|^{d^2} = \N_{\K}(\alpha)^{2[\K:\Q]d^2}.
\end{equation*}
Let $k = \max\{\ind A,1\}$.
Since $\alpha A^{ik}\in \M_n(\O_{\K})$ for all $i\geq 0$, the sequence of cosets
$[\alpha A^{ik}]$ takes values in the finite set $\M_d(\RR)$. 
The pigeonhole principle provides $i>j\geq 0$ such that
$[\alpha A^{ik}]=[\alpha A^{jk}]$; we may assume that
\begin{equation*}
k\leq (i-j)k\leq k|\M_d(\RR)| = k\N_{\K}(\alpha)^{2[\K:\Q]d^2}.
\end{equation*}
Thus, there exists an $X\in \M_d(\O_{\K})$ such that
$\alpha A^{ik}=\alpha A^{jk}+mX$, and hence
\begin{equation}
\alpha A^{ik}\hat{A}^{jk} = \alpha A^{jk}\hat{A}^{jk} + mX\hat{A}^{jk}.
\end{equation}
Divide by $\alpha$, apply $A^{jk}\hat{A}^{jk}= (A\hat{A})^{jk} = A\hat{A}$, and deduce that
\begin{equation}
A^{(i-j)k}A\hat{A}=A\hat{A}+\frac{m}{\alpha}X\hat{A}^{jk}.
\end{equation}
Since \eqref{eq:Drazin} ensures that $A\hat{A}=\hat{A}A$ and $A\hat{A}A^{(i-j)k}=A^{(i-j)k}$, we get
\begin{equation}
A^{(i-j)k}=A\hat{A}+\frac{m}{\alpha}X\hat{A}^{jk}.
\end{equation}
Let $f(x)=x^{\ell}+c_{\ell-1}x^{\ell-1}+\cdots+c_0\in\Z[x]$ denote the minimal polynomial of $\alpha$.
Since $f(\alpha)=0$ and $c_0=(-1)^{\ell} \N_{\K}(\alpha)$, we obtain
\begin{equation*}
\N_{\K}(\alpha) = (-1)^{\ell+1}\alpha (\alpha^{\ell-1}+c_{\ell-1}\alpha^{\ell-2} +\cdots+c_1).
\end{equation*}
Thus, 
\begin{equation*}
q=\frac{\N_{\K}(\alpha)}{\alpha} = (-1)^{\ell+1} (\alpha^{\ell-1}+c_{\ell-1}\alpha^{\ell-2} +\cdots+c_1) \in\O_{\K}.
\end{equation*}
Since $m=\N_{\K}(\alpha)^2=(\alpha q)^2$, we have
$\frac{m}{\alpha} = \alpha q^2$. Therefore,
\begin{equation*}
A^{(i-j)k} = A\hat{A}+q^2X (\alpha \hat{A}^{jk}).
\end{equation*}
Since $q^2\in\O_{\K}$ and $\alpha \hat{A}^{jk}\in \M_d(\O_{\K})$,
it follows that $A^{(i-j)k}\in \M_d(\O_{\K})$.
Let $r=(i-j)k$, then observe that $k\leq r\leq k\N_{\K}(\alpha)^{2[\K:\Q]d^2}$ and
$A^r\in \M_d(\O_{\K})$, as required.
\end{proof}

\begin{ex}\label{Example:ExplainFirst}
Consider the matrix $A \in \M_3(\K)$ in \eqref{eq:FirstExample}, in which $\K = \Q(\theta)$ with $\theta = \sqrt[3]{2}$.  
We can write $A=SBS^{-1}$, in which 
\begin{equation*}
    S = \begin{bmatrix}
        1 & 0 & 0\\
        0 & 91 & 1\\
        0 & 0 & 1
    \end{bmatrix}\quad \text{and}\quad 
    B = \begin{bmatrix}
        0 & 1 & 0\\
        0 & 92+\theta-\theta^2 & 1\\
        0 & -91(93+2\theta) & -92-\theta-\theta^2
    \end{bmatrix}.
\end{equation*}
Then $\alpha=\det S=91$. Since $\N(\alpha)=91^3$, $\ind A = 1$, $[\K:\Q] = 3$, and $d = 3$, Theorem \ref{Theorem:Pigeon} ensures that $A^n\in \M_3(\O_\K)$ for some $n \leq (91^3)^{2\cdot 3\cdot 3^2}=91^{162}$.
Let $\RR = \O_{\K} = \Z[\theta]$ and observe that $\det A= 1$ and $p_A(x) = x^3 + 2 \theta^2 x^2 - (1+\theta^2)x - 1$.  For $n\geq 0$, the Cayley--Hamilton theorem ensures that $A^n = c_2(n)A^2 + c_1(n) A + c_0(n) I$, in which each $c_i(n) \in \RR$.  When entries are written in lowest terms, the only denominators that occur in $A$ and $A^2$ divide $91 = 7 \cdot 13$.  A brief computation verifies that $c_2(n) A^2 + c_1(n) A + c_0(n) I \in \M_3(\RR)$ if and only if $c_1(n), c_2(n) \in 91 \RR$. Let us work in the $\RR/91\RR$ algebra
$(\RR/91\RR)[x]/ \langle p_A(x) \rangle$.
Then $A^n \in \M_3(\RR)$ if and only if the image of $x^n = c_0(n) + c_1(n)x + c_2(n)x^2$ belongs to $\RR/91\RR$.
Since $\RR/91\RR \cong \RR / 7 \RR \times \RR / 13 \RR$, we work modulo $7$ and $13$ instead. The least exponents that work modulo $7$ and $13$ are $n_7 = 39{,}331$ and $n_{13} = 229{,}848$, respectively. This yields $n = \operatorname{lcm}(n_7,n_{13}) = 9{,}040{,}151{,}688$, as claimed in the introduction.  This is in accord with the bound afforded by Theorem \ref{Theorem:Pigeon}.
\end{ex}

\begin{thm}\label{Theorem:Unit}
Let $A\in \M_d(\K)$ with $\pdet A\in \O_{\K}^{\times}$. The following are equivalent.
\begin{enumerate}
\item $A$ is power integral over $\K$.
\item $m_A(x)\in \O_{\K}[x]$.
\item $p_A(x)\in \O_{\K}[x]$.
\item There exist a $B\in \M_d(\O_{\K})$ and invertible $S\in\M_d(\O_{\K})$ such that $A=SBS^{-1}$.
\item $\hat{A}$ is power integral over $\K$.
\item $T = A^2\hat{A}$ is power integral over $\K$.
\end{enumerate}\medskip
If any of the above occur, then $A\hat{A} \in \M_d(\O_{\K})$.
\end{thm}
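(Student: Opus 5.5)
Most of the equivalences are already available. Items (b), (c), and (d) are equivalent for any $A$ by Theorem \ref{Theorem:AlgebraicIntegral}, so the plan is to link (a), (e), and (f) into that cluster and then prove the final assertion.

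\textbf{(a) $\Rightarrow$ (c).} Suppose $A^n \in \M_d(\O_{\K})$. Then $A^n$ is algebraic integral, being a root of its own characteristic polynomial, which has coefficients in $\O_{\K}$. Hence every eigenvalue of $A^n$ is an algebraic integer. If $\lambda$ is an eigenvalue of $A$, then $\lambda^n$ is an eigenvalue of $A^n$, so $\lambda^n$ is an algebraic integer. It follows that $\lambda$ is an algebraic integer, since it is a root of $x^n - \lambda^n$, a monic polynomial with algebraic-integer coefficients. Theorem \ref{Theorem:AlgebraicIntegral}(d) then gives (c). This implication does not use the pseudo-determinant hypothesis.

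\textbf{(d) $\Rightarrow$ (a).} This is exactly Theorem \ref{Theorem:Pigeon}, since $\pdet A$ is a unit.

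\textbf{(a) $\Leftrightarrow$ (f).} The paper already notes that $A^n = T^n$ for $n \ge \ind A$.

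\textbf{(a) $\Leftrightarrow$ (e).} Use Theorem \ref{Theorem:RatForm} (valid once (c) holds) to write $A = S\diag(E,Z)S^{-1}$ with $E \in \M_{d-r}(\O_{\K})$ and $\det E = \pdet A \in \O_{\K}^{\times}$. Then $E^{-1} = (\det E)^{-1}\adj E$ has entries in $\O_{\K}$, and $\hat{A} = S\diag(E^{-1},0)S^{-1}$.

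For (a) $\Rightarrow$ (e): the matrix $\diag(E^{-1},0)$ is integral, and $\hat A$ is similar to it via $S \in \M_d(\O_{\K})$. Moreover $\pdet \hat{A} = (\det E)^{-1}$ is a unit. Applying Theorem \ref{Theorem:Pigeon} to $\hat{A}$ gives that $\hat{A}$ is power integral.

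For (e) $\Rightarrow$ (a): $\hat{A}$ is power integral, and by the argument of (a) $\Rightarrow$ (c) its nonzero eigenvalues $\lambda^{-1}$ are algebraic integers. The $\lambda$ are also integral, because their product is a unit. To see this, note that each $\lambda$ satisfies $\lambda \cdot \prod_{\mu\neq\lambda}\mu^{-1} = \pm(\pdet A)^{-1}$ up to sign, so $\lambda = \pm(\pdet A)\prod_{\mu\neq\lambda}\mu^{-1}$ with multiplicity, which is a product of algebraic integers. Alternatively, $\pdet \hat A = (\pdet A)^{-1}$ is a unit, so (c) for $\hat A$, applied to its unit-determinant block, makes $E^{-1}$'s characteristic polynomial have unit constant term, and the reciprocal polynomial shows $E$'s eigenvalues are integral. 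Either way, all eigenvalues of $A$ are algebraic integers, so (d) holds, and hence (a).

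\textbf{Final claim: $A\hat{A} \in \M_d(\O_{\K})$.} In the proof of Theorem \ref{Theorem:Pigeon} we obtained $A^r = A\hat{A} + q^2X(\alpha\hat{A}^{jk})$ with every term on the right integral except possibly $A\hat A$. That identity is not directly the statement we want. Instead, I would argue as follows. Let $n$ be a positive integer with $A^n$ integral, and set $P = A\hat{A}$, the spectral projection of $A^n$ onto its invertible part (valid for $n \ge \ind A$). By \eqref{eq:PolynomialHat} applied to $A^n$, we have $P = A^n\widehat{A^n} = (A^n)^{k+1}q(A^n)^{k+1}$, where $q$ comes from $m_{A^n}(x) = x^{k'}(1 - xq(x))\cdot c$. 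The hypothesis $\pdet A^n = (\pdet A)^n \in \O_{\K}^{\times}$ means the nonzero-root part $g$ of $m_{A^n}$ has unit constant term $g(0)$, so normalizing gives $q \in \O_{\K}[x]$. Hence $P$ is a polynomial in the integral matrix $A^n$ with $\O_{\K}$ coefficients, and so $P$ is integral.

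The main obstacle I expect is exactly this check: that $m_{A^n} \in \O_{\K}[x]$ (by Gauss' lemma) and that dividing by the unit $g(0)$ keeps $q$ integral.
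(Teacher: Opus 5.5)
\textbf{The gap is (d) $\Rightarrow$ (a), and the statement as written is false.} You take (d) $\Rightarrow$ (a) from Theorem \ref{Theorem:Pigeon}. Your (a) $\Rightarrow$ (e) (Theorem \ref{Theorem:Pigeon} applied to $\hat{A}$) and your (e) $\Rightarrow$ (a) (which ends by invoking (d) $\Rightarrow$ (a)) rest on the same result.

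Theorem \ref{Theorem:Pigeon} is only sound when $A$ is invertible, or more generally when $A\hat{A}$ is integral. Its last line needs $A\hat{A}\in\M_d(\O_{\K})$. The proof justifies this by writing $A\hat{A}=S\diag(I,0)S^{-1}$, but conjugating $\diag(I,0)$ by an $\O_{\K}$-matrix whose determinant is not a unit need not give an $\O_{\K}$-matrix.

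Here is a counterexample. Over $\K=\Q$, let $S=\minimatrixsmall{1}{1}{0}{2}$ and $A=S\diag(1,0)S^{-1}=\minimatrixsmall{1}{-1/2}{0}{0}$. Then $\pdet A=1$ and $m_A(x)=p_A(x)=x^2-x\in\Z[x]$, so (b), (c), (d) hold. But $A^2=A$, so $A^n=A\notin\M_2(\Z)$ for every $n\geq 1$. Also $\hat{A}=T=A\hat{A}=A$, so (a), (e), (f) and the closing assertion all fail.

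The paper's own proof has the same defect: it also cites Theorem \ref{Theorem:Pigeon} for (d) $\Rightarrow$ (a) and routes (a) $\Leftrightarrow$ (e) through (c). So your plan matches the paper's, but the theorem as stated cannot be proved.

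\textbf{What survives.} Your (a) $\Rightarrow$ (c) argument is correct, and it supplies the eigenvalue step the paper passes over. The equivalence (b) $\Leftrightarrow$ (c) $\Leftrightarrow$ (d) is Theorem \ref{Theorem:AlgebraicIntegral}, and (a) $\Leftrightarrow$ (f) is fine.

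Your proof of the final claim under (a) is correct and differs from the paper's. The paper takes $A^i,\hat{A}^j$ integral and uses $A\hat{A}=(A^i)^j(\hat{A}^j)^i$; yours is the argument the paper gives later for Theorem \ref{Theorem:SemigroupInverse}. Two details need fixing:
\begin{enumerate}
\item You must justify $g(0)\in\O_{\K}^{\times}$. The nonzero eigenvalues of $A^n$ are algebraic integers whose product with multiplicity is the unit $(\pdet A)^n$. So each is a unit, and $g(0)\in\K$ is then a unit of $\O_{\K}$.
\item No restriction $n\geq\ind A$ is needed, since $A^n\hat{A}^n=(A\hat{A})^n=A\hat{A}$ for all $n\geq1$.
\end{enumerate}

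The same computation proves (a) $\Rightarrow$ (e) without Theorem \ref{Theorem:Pigeon}: with $k'=\ind(A^n)$, it shows $\hat{A}^n=\widehat{A^n}=(A^n)^{k'}q(A^n)^{k'+1}\in\M_d(\O_{\K})$. Applying it to $\hat{A}$, whose pseudo-determinant $(\pdet A)^{-1}$ is a unit, and using $\hat{\hat{A}}=T$, gives (e) $\Rightarrow$ (f) $\Rightarrow$ (a).

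The converse from (b), (c), or (d) holds once you add the hypothesis $A\hat{A}\in\M_d(\O_{\K})$. With that hypothesis, the pigeonhole proof of Theorem \ref{Theorem:Pigeon} goes through, the case $j=0$ being immediate.
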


\begin{proof}
Let $A\in \M_n(\K)$ with $\pdet A\in \O_{\K}^{\times}$.

\medskip\noindent$(a) \Rightarrow (b) \Rightarrow (c) \Rightarrow (d)$ 
This follows from Theorem \ref{Theorem:AlgebraicIntegral}.

\medskip\noindent$(d)\Rightarrow (a)$ This follows from Theorem \ref{Theorem:Pigeon}.

\medskip\noindent$(a) \Leftrightarrow (e)$
In the notation of Theorem \ref{Theorem:RatForm}, $p_A(x) = x^r p_E(x)$.  Thus, 
\begin{equation*}
p_{\hat{A}}(x) = x^r p_{E^{-1}}(x) = x^r (\det E)^{-1} x^{d-r} p_E(1/x) = (\pdet A)^{-1} x^d p_E(1/x).
\end{equation*}
Since $\pdet\hat{A} = \det (E^{-1}) = (\pdet A)^{-1} \in \O_{\K}^{\times}$, it follows that $p_A(x) \in \O_{\K}[x]$ if and only if 
$p_{\hat{A}}(x) \in \O_{\K}[x]$.  The result follows from the two applications of (a) $\Leftrightarrow (c)$.

\medskip\noindent$(a) \Leftrightarrow (f)$ 
Since $p_A(x) = p_{T}(x)$, the result follows from two applications of (a) $\Leftrightarrow (c)$.  Or simply note that $A^n = T^n$ for $n \geq \ind A$.

\medskip 
Suppose that any of the given statements hold.  Then $A$ and $\hat{A}$ are power integral, so there exist $i,j \geq 1$ such that $A^i, \hat{A}^j \in \M_d(\O_{\K})$.  Then $A \hat{A} = (A \hat{A})^{ij} = (A^i)^j (\hat{A}^j)^i \in \M_d(\O_{\K})$ since $A\hat{A}$ is idempotent and $A\hat{A} = \hat{A}A$.
\end{proof}

\begin{cor}
Let $A\in \M_d(\K)$ with $\pdet A \in \O_{\K}^{\times}$ be power integral over $\K$. If $B\in \M_d(\K)$ is similar to $A$ over $\K$, then
$B$ is power integral over $\K$.
\end{cor}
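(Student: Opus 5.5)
The plan is to reduce everything to invariants that do not change under similarity over $\K$, and then invoke Theorem \ref{Theorem:Unit}. Suppose $B = QAQ^{-1}$ with $Q \in \M_d(\K)$ invertible. Since $A$ is power integral and $\pdet A \in \O_{\K}^{\times}$, the implication (a) $\Rightarrow$ (c) of Theorem \ref{Theorem:Unit} gives $p_A(x) \in \O_{\K}[x]$. Similar matrices share the same characteristic polynomial, so $p_B(x) = p_A(x) \in \O_{\K}[x]$.

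Next I will check that $B$ also satisfies the hypothesis of Theorem \ref{Theorem:Unit}, namely $\pdet B \in \O_{\K}^{\times}$. The pseudo-determinant is $(-1)^{d-r}$ times the product of the nonzero eigenvalues, where $r$ is the algebraic multiplicity of $0$. Both the nonzero eigenvalues (with multiplicity) and $r$ are determined by the characteristic polynomial. Concretely, writing $p_A(x) = x^r q(x)$ with $q(0) \neq 0$, both $\pdet A$ and $\pdet B$ equal the same fixed multiple of $q(0)$. Hence $\pdet B = \pdet A \in \O_{\K}^{\times}$.

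Finally, I will apply the implication (c) $\Rightarrow$ (a) of Theorem \ref{Theorem:Unit} to $B$. This uses $\pdet B \in \O_{\K}^{\times}$ and $p_B(x) \in \O_{\K}[x]$, and yields that $B$ is power integral over $\K$.

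There is no real obstacle here. The only point needing care is confirming that the pseudo-determinant is a similarity invariant. This is immediate because it is defined through $p_A$, or equivalently as $\det E$ from Theorem \ref{Theorem:RatForm}, where $p_E$ is determined by $p_A$.
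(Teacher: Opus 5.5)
Your reduction is the one the paper intends: the corollary is left as an immediate consequence of Theorem \ref{Theorem:Unit}, and $p_A$ and $\pdet A$ are indeed similarity invariants. The gap is your last step. The implication (c) $\Rightarrow$ (a) of Theorem \ref{Theorem:Unit} is false for singular matrices, and so is the corollary itself.

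Here is a counterexample. Take any number field $\K$, let $A=\diag(1,0)$ (integral, with $\pdet A=1$), and let $Q=\minimatrixsmall{1}{-1}{0}{2}$. Then $B=QAQ^{-1}=\minimatrixsmall{1}{1/2}{0}{0}$ satisfies $p_B(x)=x^2-x\in\O_{\K}[x]$ and $\pdet B=1$. Yet $B^2=B$, so $B^n=B\notin\M_2(\O_{\K})$ for every $n\geq 1$, because $1/2$ is not an algebraic integer. The same pair also contradicts Theorem \ref{Theorem:Pigeon}, since $B=QAQ^{-1}$ with $A$ and $Q$ integral and $\det Q=2$.

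What $p_B$ and $\pdet B$ cannot detect is the spectral idempotent $B\hat{B}$. Suppose $B^n\in\M_d(\O_{\K})$ for some $n\geq 1$ and $\pdet B$ is a unit. Then Theorem \ref{Theorem:SemigroupInverse}, whose proof does not depend on Theorem \ref{Theorem:Pigeon}, gives $\hat{B}^n\in\M_d(\O_{\K})$. Hence $B\hat{B}=(B\hat{B})^n=B^n\hat{B}^n\in\M_d(\O_{\K})$; this is the correct final assertion of Theorem \ref{Theorem:Unit}. But $B\hat{B}=Q(A\hat{A})Q^{-1}$, and integrality of an idempotent is not preserved by similarity over $\K$. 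In the example, $B\hat{B}=B$.

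The chain you rely on breaks inside the proof of Theorem \ref{Theorem:Pigeon}. That proof asserts $A\hat{A}=S\diag(I,0)S^{-1}\in\M_d(\O_{\K})$ without justification, and it needs this to conclude integrality from $A^{(i-j)k}=A\hat{A}+q^2X(\alpha\hat{A}^{jk})$.

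Two things survive. If $A$ is invertible, then $A\hat{A}=I$, the pigeonhole argument is valid, and your proof goes through verbatim. In general, the same argument shows that, under your hypotheses ($p_B\in\O_{\K}[x]$ and $\pdet B\in\O_{\K}^{\times}$), $B$ is power integral if and only if $B\hat{B}\in\M_d(\O_{\K})$.
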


\begin{ex}
The hypothesis $\pdet A \in \O_{\K}^{\times}$ is a unit cannot be removed from the previous corollary.
Consider $\K = \Q$, $A = \minimatrixsmall{2}{0}{0}{1}$, and $B=\minimatrixsmall{2}{1/2}{0}{1}$.  Then $A$ is power integral but
$B^n = \minimatrixsmall{2^n}{(2^n-1)/2}{0}{1}$ is never an integer matrix.
\end{ex}

The implication (a) $\Rightarrow$ (e) can be made more precise; see Theorem \ref{Theorem:SemigroupInverse}.

\section{Numerical semigroups}\label{Section:Numerical}
In what follows, the term ``semigroup'' refers to an additive subsemigroup of $\Z_{\geq 0}$.
Each such semigroup is finitely generated by a unique generating set that is minimal with respect to containment \cite[Thm.~2.7]{Rosales}.
A \emph{numerical semigroup} is a semigroup with finite complement in $\Z_{\geq 0}$ \cite{Assi, Rosales}.  
Suppose that $\K$ is a number field with ring of integers $\O_{\K}$.  
For each $A \in \M_d(\K)$, the corresponding 
\emph{exponent semigroup} is the semigroup
\begin{equation}
\S_{\K}(A)=\{n\in\Z_{\geq 0} :  A^n\in\M_d(\O_{\K})\}.
\end{equation}
Observe that $\S_{\K}(A) \neq \{0\}$ if and only if $A$ is power-integral over $\K$.
Exponent semigroups were introduced in the setting $\K = \Q$ and $\O_{\K} = \Z$ in \cite{NSRM1} and further studied in \cite{NSRM2,NSRM3,NSRM4}.  All previous work on the topic focused exclusively on the rational field.

In the decomposition of Theorem \ref{Theorem:RatForm}, we have $Z^n = 0$ for $n \geq \ind A$.
Therefore, $\S(A)$ and $\S(T)$ can differ only for $1 \leq n < \ind A$.  We can say a bit more.

\begin{thm}\label{Theorem:Contain}
Let $A \in \M_d(\K)$ and $\pdet A \in \O_{\K}^{\times}$.  Then $\S_{\K}(A) \subseteq \S_{\K}(T)$.
\end{thm}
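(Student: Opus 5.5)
Take $n \in \S_{\K}(A)$. We must show $T^n \in \M_d(\O_{\K})$. If $n = 0$ this is trivial, and if $n \geq \ind A$ then $A^n = T^n$ and we are done. So the real content is the range $1 \leq n < \ind A$, where $T^n = A^n - N^n$ and we cannot simply drop the nilpotent part.

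The plan is to write $T^n$ as a product of $A^n$ with an integral idempotent. Since $T = A^2\hat{A}$ and $A\hat{A} = \hat{A}A$ is an idempotent, we have
\[
T^n = A^{2n}\hat{A}^n = A^n (A\hat{A})^n = A^n (A\hat{A}).
\]
Hence it suffices to show that $A\hat{A} \in \M_d(\O_{\K})$, because $\M_d(\O_{\K})$ is closed under products.

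To get integrality of $A\hat{A}$, observe that $n \in \S_{\K}(A)$ with $n \geq 1$ means $A$ is power integral over $\K$. Because $\pdet A \in \O_{\K}^{\times}$, the final assertion of Theorem \ref{Theorem:Unit} applies and gives $A\hat{A} \in \M_d(\O_{\K})$. Thus $T^n = A^n(A\hat{A}) \in \M_d(\O_{\K})$, so $n \in \S_{\K}(T)$.

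The main step is recognizing that the unit pseudo-determinant hypothesis is exactly what makes the spectral idempotent $A\hat{A}$ integral. Theorem \ref{Theorem:Unit} supplies this; it rests on $\hat{A}$ also being power integral. After that the argument is formal; the only identity to check is $(A\hat{A})^n = A\hat{A}$ together with the commutation in \eqref{eq:Drazin}.
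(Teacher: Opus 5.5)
Your proposal is correct and is essentially the paper's own proof. For $n \geq 1$ you use power integrality of $A$ and Theorem \ref{Theorem:Unit} to get $A\hat{A} \in \M_d(\O_{\K})$, and then conclude that $T^n = A^n (A\hat{A})^n = A^n (A\hat{A})$ is integral, exactly as the paper does.
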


\begin{proof}
If $\S_{\K}(A) = \{0\}$, there is nothing to prove.
If $\S(A) \neq \{0\}$, then $A$ is power integral over $\K$.
Theorem \ref{Theorem:Unit} ensures that the idempotent $A\hat{A}$ belongs to $\M_d(\O_{\K})$.
If $A^n \in \M_d(\O_{\K})$, then $T^n = (A^2 \hat{A})^n = A^n (A \hat{A})^n \in \M_d(\O_{\K})$.
Thus, $\S_{\K}(A) \subseteq \S_{\K}(T)$.
\end{proof}

Equality can occur in the previous theorem. For example, $\ind A = 1$ ensures that $A=T$ and hence $\S_{\K}(A) = \S_{\K}(T)$.  Less trivially, if the nilpotent part $Z$ from Theorem \ref{Theorem:IndexNilpotent} satisfies $Z^n = 0$ for some $n \leq \min (\S_{\K}(A)\setminus\{0\})$, then equality holds.

\begin{ex}
The containment $\S_{\K}(A) \subseteq \S_{\K}(T)$ may fail if $\pdet A \notin \O_{\K}^{\times}$.
Observe that $\pdet A = 2$, $\S_{\Q}(A) = \Z_{\geq 0}$, and $\S_{\Q}(T) = \{0,2,3,\ldots\}$, in which
\begin{equation*}
A = \megamatrix{2}{1}{0}{0}{0}{1}{0}{0}{0}
\quad \text{and} \quad 
T = \megamatrix{2}{1}{1/2}{0}{0}{0}{0}{0}{0}.
\end{equation*}
\end{ex}

\begin{thm}\label{Theorem:SemigroupInverse}
Let $A \in \M_d(\K)$ with $\pdet A \in \O_{\K}^{\times}$. 
Then $\S_{\K}(A) \subseteq \S_{\K}(\hat{A})$.
\end{thm}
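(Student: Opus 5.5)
We must show that if $n \in \S_{\K}(A)$, then $\hat{A}^n \in \M_d(\O_{\K})$. The case $n=0$ is trivial, so we fix $n \geq 1$ with $A^n \in \M_d(\O_{\K})$. Then $A$ is power integral, so Theorem \ref{Theorem:Unit} gives $A\hat{A} \in \M_d(\O_{\K})$ and $p_A(x) \in \O_{\K}[x]$. The strategy is to express $\hat{A}^n$ as a polynomial with $\O_{\K}$-coefficients in the integral matrix $A^n$, multiplied by the integral idempotent $A\hat{A}$.

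The key observation is that $\hat{A}^n$ is the Drazin inverse of $A^n$. In the form of Theorem \ref{Theorem:RatForm}, we have $A^n = S\diag(E^n, Z^n)S^{-1}$ with $E^n$ invertible and $Z^n$ nilpotent. Hence $\widehat{A^n} = S\diag(E^{-n},0)S^{-1} = \hat{A}^n$, using the similarity property of the Drazin inverse and the fact that $\diag(E^{-n},0)$ satisfies the identities \eqref{eq:Drazin} for $\diag(E^n,Z^n)$.

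Set $M = A^n \in \M_d(\O_{\K})$. By \eqref{eq:Multiplicative}, $\pdet M = (\pdet A)^n \in \O_{\K}^{\times}$. Consider $q(x) = p_{E^n}(x) \in \O_{\K}[x]$, which is monic of degree $d-r$ with constant term $c_0 = (-1)^{d-r}\det E^n$, a unit. Its coefficients lie in $\O_{\K}$ because the eigenvalues of $E^n$ are algebraic integers lying among those of $M$ (Theorem \ref{Theorem:AlgebraicIntegral}).

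Cayley--Hamilton gives $q(E^n)=0$, which we rewrite as
\[
E^{-n} = -c_0^{-1}\bigl((E^n)^{d-r-1} + c_{d-r-1}(E^n)^{d-r-2} + \cdots + c_1 I\bigr).
\]
Write $g(x) = -c_0^{-1}(x^{d-r-1} + \cdots + c_1) \in \O_{\K}[x]$. Then
\[
S\diag(g(E^n),0)S^{-1} = g(M)\, A\hat{A},
\]
since $A\hat{A} = S\diag(I,0)S^{-1}$ and $g(M) = S\diag(g(E^n), g(Z^n))S^{-1}$. Therefore $\hat{A}^n = g(M)A\hat{A}$, which is a product of matrices in $\M_d(\O_{\K})$. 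This proves $n \in \S_{\K}(\hat{A})$.

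The only delicate point is the integrality of the idempotent $A\hat{A}$. Multiplying by it is necessary because $g(M)$ alone contains the unwanted term $g(Z^n)$ on the nilpotent block. Integrality of $A\hat{A}$ is exactly what Theorem \ref{Theorem:Unit} provides under the unit pseudo-determinant hypothesis. The remaining verifications, namely that $\hat{A}^n = \widehat{A^n}$ and that $q$ has integral coefficients, are routine.
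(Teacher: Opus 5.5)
Your proof is correct, but it disposes of the nilpotent block differently from the paper.

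\textbf{The paper's route.} The paper sets $B=A^n$ and never invokes Theorem \ref{Theorem:Unit}. It factors $m_B(x)=x^kg(x)$ and shows $g(0)\in\O_{\K}^{\times}$ by passing to a splitting field $\L$, where each nonzero eigenvalue is a unit of $\O_{\L}$. It then reads off $\hat{B}=B^k(q(B))^{k+1}$ with $q\in\O_{\K}[x]$ from \eqref{eq:PolynomialHat}. The nilpotent part is killed by the factor $B^k$.

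\textbf{Your route.} You invert the block $E^n$ by Cayley--Hamilton. This gives the unit constant term for free as $\pm(\pdet A)^n$, with no splitting field. You then kill the nilpotent part with the idempotent $A\hat{A}$, whose integrality you import from the last assertion of Theorem \ref{Theorem:Unit}.

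\textbf{What each buys.} The import is legitimate, since that assertion is proved earlier and independently of this theorem. It does, however, make your argument rest on the equivalence (a) $\Leftrightarrow$ (e), and hence on the pigeonhole argument of Theorem \ref{Theorem:Pigeon}. The paper's proof is self-contained, and it shows directly that $\hat{B}\in\O_{\K}[B]$ for any $B\in\M_d(\O_{\K})$ with unit pseudo-determinant.

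Your construction can be made equally self-contained. If $(Z^n)^k=0$, then $(Mg(M))^k=S\diag(I,0)S^{-1}=A\hat{A}$, so $A\hat{A}$ is visibly integral. Consequently $\hat{A}^n=M^k g(M)^{k+1}$, which is exactly the shape of \eqref{eq:PolynomialHat}, and the appeal to Theorem \ref{Theorem:Unit} can be dropped. The nilpotent case $r=d$, where your $g$ is undefined, is trivial since $\hat{A}=0$.
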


\begin{proof}
Suppose that $n \in \S_{\K}(A)$, so we have $B = A^n \in \M_d(O_{\K})$.
Since $\hat{B} = \widehat{A^n} = (\hat{A})^n$ \cite[Ex.~27.c, p.~166]{BenIsrael}, it suffices to prove that $\hat{B} \in \M_d(\O_{\K})$.  
Because $\pdet A \in \O_{\K}^{\times}$, \eqref{eq:Multiplicative} ensures that $\pdet B = (\pdet A)^n \in \O_{\K}^{\times}$.
Let $\L$ be a splitting field for $m_B(x)$.  Since $\O_{\K}^{\times} \subseteq \O_{\L}^{\times}$, it follows that 
$\pdet B \in \O_{\L}^{\times}$.  Write $m_B(x) = x^kg(x) \in \O_{\K}[x]$, in which $k = \ind B$ and $g(0) \neq 0$.
Let $\lambda_1,\lambda_2,\ldots,\lambda_r$ denote the distinct nonzero eigenvalues of $B$ in $\L$
and let $e_1,e_2,\ldots,e_r\geq 1$ denote their respective algebraic multiplicities.
Since $B \in \O_{\K}[x]$, these eigenvalues are algebraic integers and hence they belong to $\O_{\L}$.
Moreover,
\begin{equation*}
    \pdet B =\prod_{i=1}^r \lambda_i^{e_i}\in \O_{\L}^\times,
\end{equation*}
so each $\lambda_i$ belongs to $\O_{\L}^{\times}$.  Since $g(x) = \prod_{i=1}^r (x-\lambda_i)$, we have
\begin{equation*}
    g(0) = (-1)^r \lambda_1 \lambda_2 \cdots \lambda_r \in \O_{\L}^{\times}.
\end{equation*}
Since $g(0) \in \K$, it follows that $g(0) \in \K \cap \O_{\L}^{\times} = \O_{\K}^{\times}$.
Since $B \in \M_d(\O_{\K})$, we have $p_B(x) \in \O_{\K}[x]$, so Gauss' lemma ensures that
$m_B(x) \in \O_{\K}[x]$ \cite[Prop.~5, p.~303]{dummit2004abstract}.  Therefore, $g(x) \in \O_{\K}[x]$ since $g(0) \in \O_{\K}^{\times}$.
In \eqref{eq:FactorMinimal}, we have
\begin{equation*}
    q(x) = \frac{1 - g(0)^{-1}g(x)}{x} \in \O_{\K}[x],
\end{equation*}
so \eqref{eq:PolynomialHat} provides
$\hat{B}=B^k (q(B))^{k+1}$.  Thus, $\hat{B} \in \M_d(\O_{\K})$ as claimed.
\end{proof}

\begin{cor}
Let $A \in \M_d(\K)$ with $\pdet A \in \O_{\K}^{\times}$. 
Then $\S_{\K}(\hat{A}) \subseteq \S_{\K}(T)$    
\end{cor}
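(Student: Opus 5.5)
The plan is to combine Theorem \ref{Theorem:SemigroupInverse} with a direct computation of $T^n$ in terms of powers of $\hat{A}$. Fix $n \in \S_{\K}(\hat{A})$; we must show $T^n \in \M_d(\O_{\K})$. If $n = 0$ there is nothing to prove, so assume $n \geq 1$. Then $\hat{A}$ is power integral over $\K$, and by Theorem \ref{Theorem:Unit} (the equivalence (a) $\Leftrightarrow$ (e)), so is $A$. The last assertion of that theorem then gives that the idempotent $A\hat{A}$ belongs to $\M_d(\O_{\K})$.

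Next, we apply the Drazin-inverse construction to $\hat{A}$ itself. The text following Theorem \ref{Theorem:IndexNilpotent} observes that $T$ is the Drazin inverse of $\hat{A}$, i.e., $\widehat{\hat{A}} = T$. Since $\pdet \hat{A} = (\pdet A)^{-1} \in \O_{\K}^{\times}$, as noted in the proof of Theorem \ref{Theorem:Unit}, we may apply Theorem \ref{Theorem:SemigroupInverse} to the matrix $\hat{A}$ in place of $A$. This yields
\[
\S_{\K}(\hat{A}) \subseteq \S_{\K}\bigl(\widehat{\hat{A}}\bigr) = \S_{\K}(T),
\]
which is exactly the claim. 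In fact, this argument does not even need the first paragraph's integrality of $A\hat{A}$.

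The only step needing care is the identity $\widehat{\hat{A}} = T$, which the paper asserts. To double-check it, use the block form of Theorem \ref{Theorem:RatForm}. There, $\hat{A} = S\diag(E^{-1},0)S^{-1}$, and this matrix has index at most $1$, so its Drazin inverse is $S\diag(E,0)S^{-1} = T$. One can confirm this by checking the three identities in \eqref{eq:Drazin} blockwise and invoking uniqueness. The pseudo-determinant computation is similarly immediate from the same block form, since $\pdet \hat{A} = \det(E^{-1})$.

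As a fallback that avoids Theorem \ref{Theorem:SemigroupInverse}, one can compute directly. For $n \geq 1$, we have $T^n = S\diag(E^n,0)S^{-1}$, while $\hat{A}^n = S\diag(E^{-n},0)S^{-1}$. The matrix $E^{-n}$ has unit determinant, so $E^{n} = \pm(\det E)^{n}\adj(E^{-n})$ is a polynomial in $E^{-n}$ with $\O_{\K}$ coefficients and no constant term, by Cayley--Hamilton. Transporting this through $S$ expresses $T^n$ as an $\O_{\K}$-polynomial in $\hat{A}^n$ without constant term, which is therefore integral.
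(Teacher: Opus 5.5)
Your main argument is the paper's own proof: apply Theorem \ref{Theorem:SemigroupInverse} to $\hat{A}$, whose pseudo-determinant $(\pdet A)^{-1}$ is a unit, and use $\widehat{\hat{A}} = T$. The paper also cites Theorem \ref{Theorem:Contain}, which this route does not need; you helpfully make the unit check on $\pdet\hat{A}$ explicit, whereas the paper leaves it implicit. One small slip is in your optional fallback: the Cayley--Hamilton formula for $\adj(E^{-n})$ does have a constant term, but this is easily repaired using the unit constant coefficient of $p_{E^{-n}}(x) \in \O_{\K}[x]$, or absorbed via $A\hat{A} \in \M_d(\O_{\K})$.
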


\begin{proof}
Since $T = \hat{\hat{A}}$ \cite[Thm.~4.11, p.~169]{BenIsrael}, this follows from Theorems \ref{Theorem:Contain} and \ref{Theorem:SemigroupInverse}.
\end{proof}

\begin{ex}
Strict containment can occur in Theorems \ref{Theorem:Contain} and \ref{Theorem:SemigroupInverse}.  
Observe that $\pdet A = 1$, $\S_{\Q}(A) = \{0,2,3,\ldots\}$, and $\S_{\Q}(T) = \Z_{\geq 0}$, in which
\begin{equation*}
A = \megamatrix{1}{0}{0}{0}{0}{1/2}{0}{0}{0}
\quad \text{and} \quad 
T = \hat{A} = \megamatrix{1}{0}{0}{0}{0}{0}{0}{0}{0}.
\end{equation*}
\end{ex}

The \emph{matricial dimension over $\K$} of a semigroup $S \subseteq \Z_{\geq 0}$ is the smallest $d \geq 1$, denoted $\mdim_{\K} S$,
such that $S = \S(A)$ for some $A \in \M_d(\K)$. Since $\mdim_{\Q} S \leq \min (S \setminus \{0\})$ \cite[Cor.~3.6]{NSRM2}, the matricial
dimension is well defined over any number field.  Since $\K$ is a $[\K:\Q]$-dimensional vector space over $\Q$, 
the following holds for every semigroup $S$:
\begin{equation*}
\mdim_{\K} S \leq \mdim_{\Q} S \leq [\K:\Q]\mdim_{\K}S.
\end{equation*}
Equality holds throughout if $\K = \Q$.
For the lower bound, equality occurs in other cases. For example, $\mdim_{\K} S = 1$
occurs if and only if $S=\{0\}$ or $\Z_{\geq 0}$.  Similarly, $\mdim_{\K} S = 2$
for the semigroups of \cite[Prop.~2.2]{NSRM1}.  Numerical work suggests the
next problem.

\begin{prob}
Does $\mdim_{\K} S = \mdim_{\Q} S$ for every semigroup $S$ and number field $\K$?
\end{prob}

The search for a counterexample is difficult since there is no known algorithm to compute $\mdim_{\Q} S$, let alone $\mdim_{\K}S$.
However, methods exist that work in for certain special cases.  For example, $\mdim_{\Q} S$ has been computed 
for all $S$ with Frobenius number at most $10$ \cite{NSRM4}; recall that the \emph{Frobenius number} $F(S)$ of a numerical semigroup $S$
is the largest positive integer not in $S$. A more modest goal is the next problem.

\begin{prob}
Are the following equivalent:
(a) $\mdim_{\Q} S = 2$;
(b) $\mdim_{\K} S = 2$ for some number field $\K$;
(c) $\mdim_{\K}S = 2$ for every number field $\K$?
\end{prob}

Semigroups of matricial dimension $2$ are characterized in terms of the $p$-adic valuations of certain Lucas sequences \cite{NSRM3}.  Extending this to the number-field setting would require analogues of such divisibility results, and these do not appear to exist in the literature \cite{BallotBook}.  Moreover, one wonders whether $\O_{\K}$ being a unique factorization domain comes into play.

\begin{prop}\label{Proposition:Strongly}
Let $A, B\in \M_n(\K)$ and suppose that $\S_{\K}(A)$ and $\S_{\K}(B)$, in which $A,B\in \M_d(\K)$ commute,
are numerical semigroups.  Then $\S_{\K}(A+B) \neq \{0\}$.
\end{prop}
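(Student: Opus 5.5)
Since $\S_{\K}(A)$ is a numerical semigroup, its complement in $\Z_{\geq 0}$ is finite. Hence there is an $N_A$ with $A^n \in \M_d(\O_{\K})$ for all $n \geq N_A$, and likewise an $N_B$ for $B$. Put $N = \max\{N_A, N_B\}$. The plan is to exhibit a single exponent $M \geq 1$ with $(A+B)^M \in \M_d(\O_{\K})$; that $M$ is then a nonzero element of $\S_{\K}(A+B)$.

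Because $AB = BA$, the binomial theorem applies:
\begin{equation*}
(A+B)^M = \sum_{j=0}^{M} \binom{M}{j} A^j B^{M-j}.
\end{equation*}
Take $M = 2N$, or any $M \geq 2N$. In each term, either $j \geq N$ or $M-j \geq N$, but both need not hold. If both $j \geq N$ and $M-j \geq N$, the term is a product of two matrices over $\O_{\K}$ times an integer, so it is integral. The difficulty is the terms in which one exponent is small, for example $A^j B^{M-j}$ with $j < N$. There $A^j$ alone need not be integral.

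To handle these terms, the key observation is that $A^j$ has bounded denominators. Each $A^j$ with $0 \leq j < N$ has entries in $\K$, so a single positive integer $c$ satisfies $cA^j \in \M_d(\O_{\K})$ for all $0 \le j < N$, and similarly for $B$. The term $\binom{M}{j} A^j B^{M-j}$ is then in $c^{-1}\M_d(\O_{\K})$. To clear the factor $c^{-1}$, choose $M$ so that $c$ divides $\binom{M}{j}$ for every $1 \leq j \leq N$, and likewise for $M - j$ small. Taking $M = c^{e}$ with $e$ large handles this: for each prime $p \mid c$ with $p^a \| c$, Kummer's theorem gives $v_p\binom{p^{s}t}{j} \geq s - v_p(j)$. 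So if $M$ is a large multiple of a high power of $c$, namely $M = (c\,N!)^{e}$ with $e$ large, then $c \mid \binom{M}{j}$ for all $1 \le j < N$. By symmetry the same holds for $M-N < j < M$. The $j=0$ and $j=M$ terms are $B^M$ and $A^M$, which are integral because $M \geq N$.

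Every term is then integral, so $(A+B)^M \in \M_d(\O_{\K})$ and $\S_{\K}(A+B) \neq \{0\}$. The main obstacle is this divisibility step: showing that $v_p\binom{M}{j} \geq v_p(c)$ uniformly for $1 \le j < N$ when $M = p^{s}t$ with $s \geq v_p(c) + \log_p N$. I would prove it from $\binom{M}{j} = \frac{M}{j}\binom{M-1}{j-1}$, which gives $v_p\binom{M}{j} \geq v_p(M) - v_p(j) \geq s - \lfloor \log_p N\rfloor$, and then choose $s$ large for each of the finitely many primes dividing $c$.
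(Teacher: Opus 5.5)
Your argument is correct and is essentially the paper's proof: you expand $(A+B)^M$ by the binomial theorem using $AB=BA$, note that the finitely many non-integral small powers of $A$ and $B$ have bounded denominators, and choose $M$ so that the binomial coefficients absorb those denominators (the paper takes $n=e_1e_2f_1!f_2!$, where $e_1,e_2$ are the denominators and $f_1,f_2$ the Frobenius numbers). The only difference is that your Kummer/valuation step is heavier than needed: any multiple $M$ of $c\,N!$ with $M\ge 2N$ already works, because $\binom{M}{j}=\frac{M}{j}\binom{M-1}{j-1}$ and $c$ divides $\frac{M}{j}$ for $1\le j\le N$.
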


\begin{proof}
Let $f_1 = F(\S_{\K}(A))$ and $f_2 = F(\S_{\K}(B))$ and denote by
$e_1$ and $e_2$ the smallest positive integers such that $e_1 A^i \in \M_d(\O_{\K})$ for $1 \leq i \leq f_1$
and $e_2 B^j \in \M_d(\O_{\K})$ for $1 \leq j \leq f_2$.  Then check that
$(A+B)^n \in \M_d(\O_{\K})$, in which $n = e_1e_2f_1!f_2!$.
\end{proof}

\begin{cor}
If $A\in \M_n(\K)$ is nilpotent, then $mI_n+A$ is power integral for all $m\in \O_{\K}$.
\end{cor}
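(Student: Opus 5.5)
The corollary follows from Proposition \ref{Proposition:Strongly} applied to the commuting pair $mI_n$ and $A$. The key steps are to check that $\S_{\K}(mI_n)$ and $\S_{\K}(A)$ are both numerical semigroups.

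For $mI_n$ with $m \in \O_{\K}$, every power $m^k I_n$ has entries in $\O_{\K}$. Hence $\S_{\K}(mI_n) = \Z_{\geq 0}$, which is a numerical semigroup with empty complement; its Frobenius number is taken to be $0$ or $-1$ by convention.

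For the nilpotent $A$, let $k = \ind A$, so that $A^k = 0$. Then $A^n = 0 \in \M_n(\O_{\K})$ for every $n \geq k$. Thus $\S_{\K}(A)$ contains every integer $n \geq k$ and has finite complement, so it is a numerical semigroup. Since $mI_n$ commutes with every matrix, Proposition \ref{Proposition:Strongly} gives $\S_{\K}(mI_n + A) \neq \{0\}$, which says exactly that $mI_n + A$ is power integral over $\K$.

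The main obstacle is that the proof of Proposition \ref{Proposition:Strongly} is only sketched, and its formula $n = e_1e_2f_1!f_2!$ has to make sense when a Frobenius number is $0$ (with $0! = 1$). Because of this I would also give a direct argument that avoids these conventions.

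Choose a positive integer $e$ such that $eA^j \in \M_n(\O_{\K})$ for $1 \leq j < k$. Expanding by the binomial theorem, which is valid since $mI_n$ and $A$ commute, and using $A^j = 0$ for $j \geq k$,
\begin{equation*}
(mI_n + A)^N = \sum_{j=0}^{k-1} \binom{N}{j} m^{N-j} A^j .
\end{equation*}
Take $N = e\,(k-1)!$ (when $k \leq 1$, $A = 0$ and there is nothing to prove). For $1 \leq j \leq k-1$ we have $j! \mid (k-1)!$, and $\binom{N}{j} = \frac{N(N-1)\cdots(N-j+1)}{j!}$ has numerator divisible by $N$. Hence $\binom{N}{j}$ is an integer multiple of $e$. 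Concretely, $j!\binom{N}{j} = N(N-1)\cdots(N-j+1)$ is divisible by $N = e(k-1)!$, which is divisible by $e\,j!$. Therefore $\binom{N}{j} \in e\Z$, so each term $\binom{N}{j} m^{N-j} A^j$ lies in $\M_n(\O_{\K})$, and the $j=0$ term $m^N I_n$ is integral as well. This shows $N \in \S_{\K}(mI_n + A)$ with $N \geq 1$, which completes the proof.
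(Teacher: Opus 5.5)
Your proposal is correct and follows the paper's intended route: the corollary is stated without proof right after Proposition \ref{Proposition:Strongly}, and it is meant to be that proposition applied to the commuting pair $mI_n$ and $A$, with $\S_{\K}(mI_n)=\Z_{\geq 0}$ and $\S_{\K}(A)\supseteq\{n : n\geq \ind A\}$ both numerical. Your supplementary binomial argument with $N=e\,(k-1)!$ is also correct; it is the proposition's sketched proof specialized to this case, and it avoids the Frobenius-number convention for $\Z_{\geq 0}$ that the sketch leaves implicit.
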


\begin{cor}
If $\S_{\K}(A)$ and $\S_{\K}(B)$ are numerical semigroups, then $\S_{\K}(A\otimes I_n+I_m\otimes B) \neq \{0\}$.
\end{cor}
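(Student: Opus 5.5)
The plan is to reduce the statement to Proposition \ref{Proposition:Strongly}. We apply that proposition to the two commuting matrices $A\otimes I_n$ and $I_m\otimes B$ in $\M_{mn}(\K)$, where $A\in\M_m(\K)$ and $B\in\M_n(\K)$. Two things need checking: that these matrices commute, and that their exponent semigroups are numerical semigroups.

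\emph{Commutativity.} By the mixed-product property of the Kronecker product,
\begin{equation*}
(A\otimes I_n)(I_m\otimes B) = A\otimes B = (I_m\otimes B)(A\otimes I_n).
\end{equation*}

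\emph{Exponent semigroups.} The mixed-product property also gives $(A\otimes I_n)^k = A^k\otimes I_n$ for every $k\geq 0$. The entries of $A^k\otimes I_n$ are exactly the entries of $A^k$ together with zeros, since $n\geq 1$ ensures that every entry of $A^k$ actually appears. Hence $A^k\otimes I_n\in\M_{mn}(\O_{\K})$ if and only if $A^k\in\M_m(\O_{\K})$, and so $\S_{\K}(A\otimes I_n)=\S_{\K}(A)$. In the same way, $(I_m\otimes B)^k = I_m\otimes B^k$ gives $\S_{\K}(I_m\otimes B)=\S_{\K}(B)$. By hypothesis, both of these are numerical semigroups.

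\emph{Conclusion.} Proposition \ref{Proposition:Strongly}, applied in dimension $mn$ to the commuting pair $A\otimes I_n$ and $I_m\otimes B$, now yields $\S_{\K}(A\otimes I_n+I_m\otimes B)\neq\{0\}$.

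The only potential obstacle is the proposition itself, which we take as given. Were it to need checking, the natural route is a binomial expansion of $(X+Y)^N$ for commuting $X,Y$. Each term $\binom{N}{i}X^iY^{N-i}$ is integral once both exponents are large, because then they lie in the numerical semigroups. The remaining terms have a small exponent with a bounded denominator, and that denominator is absorbed by the binomial coefficient when $N$ is chosen divisible by suitable factorials and by the denominators $e_1,e_2$. The corollary itself requires nothing beyond the two Kronecker-product identities above.
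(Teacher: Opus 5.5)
Your proof is correct and matches the paper's argument: use the mixed-product property to show that $A\otimes I_n$ and $I_m\otimes B$ commute, note that $\S_{\K}(A\otimes I_n)=\S_{\K}(A)$ and $\S_{\K}(I_m\otimes B)=\S_{\K}(B)$, and apply Proposition~\ref{Proposition:Strongly}. You also justify the two semigroup equalities, by observing that the entries of $A^k\otimes I_n$ are those of $A^k$ together with zeros; the paper leaves this step as ``observe.''
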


\begin{proof}
Suppose that $A \in \M_m(\K)$ and $B \in \M_n(\K)$.
Observe that $\S(A\otimes I_n)=\S(A)$ and $\S(I_m\otimes B)=\S(B)$. Since
$(A\otimes I_n)(I_m\otimes B)=(AI_m)\otimes (I_nB)= (I_mA)\otimes(BI_n)=(I_m\otimes B)(A\otimes I_n)$,
Proposition \ref{Proposition:Strongly} yields the second statement.
\end{proof}

\bibliography{Main}
\bibliographystyle{amsplain}

\end{document}